\renewcommand{\cite}{\citet}
\theoremstyle{plain}
\newtheorem{theorem}{Theorem}[section]
\newtheorem{lemma}[theorem]{Lemma}
\theoremstyle{definition}
\theoremstyle{remark}
\newtheorem{remark}[theorem]{Remark}
\makeatletter \@addtoreset{equation}{section} \makeatother
\renewcommand\theequation{\thesection.\arabic{equation}}
      \def\PP{{\mathbb P}}
        \def\vv#1{{\boldsymbol #1}}
\def\wt#1{\widetilde{#1}}	
\def\topp#1{^{(#1)}}
        \def\calS{{\mathcal S}}
            \def\calC{{\mathcal C}}
                \def\calQ{{\mathcal Q}}
                   \def\calM{{\mathcal M}}
                     \def\calX{{\mathcal X}}
                       \def\calY{{\mathcal Y}}
                         \def\calU{{\mathcal U}}
                           \def\calV{{\mathcal V}}
\renewcommand*\env@matrix[1][\arraystretch]{%
  \edef\arraystretch{#1}%
  \hskip -\arraycolsep
  \let\@ifnextchar\new@ifnextchar
  \array{*\c@MaxMatrixCols c}}
\numberwithin{equation}{section}
\newcommand{\RR}{\mathbb{R}}
\newcommand{\ZZ}{\mathbb{Z}}
\newcommand{\NN}{\mathbb{N}}
\def\topp#1{^{(#1)}}
\newcommand{\C}{  \mathsf c }
\newcommand{\A}{  \mathsf a}
\newcommand{\B}{ \mathsf b}
\newcommand{\D}{  \mathsf d}
  \newcommand{\Dq}{   {\mathcal D}_q  }
\title[Stationary distribution of open ASEP]{Stationary distribution of open asymmetric simple exclusion processes on an interval as a marginal of a two-layer ensemble}
\author{W{\l}odek Bryc}
\address
{
Department of Mathematical Sciences\\
University of Cincinnati\\
2815 Commons Way\\
Cincinnati, OH, 45221-0025, USA.
}
\email{wlodek.bryc@gmail.com}
\keywords{two layer ensemble; stationary measure of open ASEP}
\subjclass[2020]{60K35;60F10}
\newcounter{oldeq}
\newcounter{usesofarxiv}
 \newcommand{\arxiv}[1]{
\setcounter{oldeq}{\value{equation}}
 \addtocounter{usesofarxiv}{1}
 \setcounter{equation}{0}
\def\theoldeq{\theequation}
\def\theequation{x-\arabic{usesofarxiv}.\arabic{equation}}
\def\theequation{\arabic{section}.\arabic{usesofarxiv}.\arabic{equation}}
\def\theequation{\thesection.\arabic{usesofarxiv}.\arabic{equation}}
  \colorlet{shadecolor}{gray!10}
{\footnotesize
\begin{shaded}#1
\end{shaded}
   \setcounter{equation}{\value{oldeq}}
\numberwithin{equation}{section}
}}
\begin{document}

\begin{abstract}
We investigate the asymmetric simple exclusion process (ASEP) on an interval with open boundaries. We provide a representation for its stationary distribution as a marginal of the top layer of a two-layer ensemble under Liggett's condition. The representation is valid in the fan region and in the shock region, extending the representation previously obtained in \cite{Bryc-Zatitskii-2024} to ASEP. We also give a recursion for the two-layer weight function.

\end{abstract}

\maketitle
\arxiv{This is an expanded version of the paper. }
\section{Introduction}
\subsection{Asymmetric simple exclusion process}

The asymmetric simple exclusion process (ASEP)  is an interacting particle system on $\ZZ$, $\NN$ or on a finite set  such that each site can have at most one particle, the particles can only move to the two nearest neighbors and are more likely to move to the right.
 The variant in which we are interested, the asymmetric simple exclusion process  with open boundaries, is a continuous-time finite-state Markov process that models the movement of particles along the   sites  $\{1,\dots,L\}$ where particles can leave or enter the segment $\{1,\dots,L\}$ at the boundaries. The process was first introduced in \cite{Macdonald1968kinetics} to model the concurrent progressive movement of multiple non-overlapping ribosomes of moderate size $\ell$ on a lengthy mRNA template.  For $\ell=1$, the non-overlapping  becomes exclusion condition and  the resulting process is now called the open ASEP.
% (larger values of $\ell$ are discussed in \cite{shaw2003totally}. More complicated  models for protein synthesis have been proposed for example in recent papers \cite{Mines-Lipniacki-2022,ngoc2024exactly}.)
There are 5 parameters that describe the asymmetry of jumps and   boundary behavior of ASEP: particles can move to the nearest site to the
right with the rate $1$ and to the left with the rate $0\leq q<1$, provided the
target site is empty.
 In addition, particles enter at the left side boundary location $1$ at rate
$\alpha>0$ and leave it at rate $\gamma\geq0$. Particles can exit at the right side boundary location $L$ at rate $\beta > 0$ or arrive there at rate $\delta\geq 0$.  This informal description is illustrated in Fig. \ref{Fig1}.
For a formal description of the infinitesimal generator of this Markov process (under the so called Liggett's condition \eqref{Liggets-greek}), we refer  e.g. to \cite[Section 3]{Liggett-1975} or \cite[Chapter 3]{Liggett-1999Stoch-Inter}.

The stationary measure for open ASEP is often called a steady state  as  there is  a net flux of particles flowing through the system. This stationary measure is of considerable interest and has been studied for a long time, starting with \cite{Macdonald1968kinetics}   and then \cite{Liggett-1975},  \cite{derrida93exact}.
 In particular, the celebrated matrix model \cite{derrida93exact} is an indispensable tool to study properties and build useful representations of the stationary measure.
 %that many other representations build upon.
  Numerous references are available: we mention here \cite{brak2006combinatorial,Bryc-Swieca-2018,Bryc-Wesolowski-2015-asep,Corteel-Williams-2011,
  Corteel-Williams-2013-erratum,enaud2004large,Uchiyama-Sasamoto-Wadati-2004,wang2023askey}. For more references, we refer to \cite{Bertini-2007,derrida07nonequilibrium,Liggett-1999Stoch-Inter}.

In this paper, we establish a representation for the stationary distribution of ASEP as a marginal of the top layer of a two-layer ensemble. This extends the two-line representation  \cite[Theorem 1]{Bryc-Zatitskii-2024} to $q>0$ and \cite[Section 2.3]{Barraquand2023Motzkin} beyond the fan region. Our main contribution is a semi-explicit rational function expression for the joint law of the two layers, and a recursion for the corresponding weight function. Other authors considered two-layer representations that are different from ours.    For $q=0$, Ref. \cite[Section 3.2]{duchi2005combinatorial} represents the stationary measure as a marginal of a "two-layer" ensemble, which is a stationary law for a Markov evolution on two-layers that we do not have for our representation.
Another two layer representation  implicit in \cite[Section 5.2]{nestoridi2023approximating} overlaps with ours for a special choice of parameters in both models.

\begin{figure}[tbh]
 \begin{tikzpicture}[scale=.8]
\draw [fill=black, ultra thick] (.5,1) circle [radius=0.2];
  \draw [ultra thick] (1.5,1) circle [radius=0.2];
\draw [fill=black, ultra thick] (2.5,1) circle [radius=0.2];
  \draw [ultra thick] (5,1) circle [radius=0.2];
   \draw [fill=black, ultra thick] (6,1) circle [radius=0.2];

   \node [above] at (-1.5,1.) {left};
   \node [below] at (-1.5,1.) {reservoir};
      \draw[-,dashed,blue] (-2.5,-1) to (-2.5,3);
    \draw[-,dashed,blue] (-0.5,-1) to (-0.5,3);
        \draw[-,dashed,blue] (-2.5,3) to (-0.5,3);
             \draw[-,dashed,blue] (-2.5,-1) to (-0.5,-1);
      \node [above] at (12.5,1.) {right };
       \node [below] at (12.5,1.) {reservoir};
      \draw[-,dashed,blue] (11.5,-1) to (11.5,3);
       \draw[-,dashed,blue] (11.5,3) to (13.5,3);
       \draw[-,dashed,blue] (11.5,-1) to (13.5,-1);
         \draw[-,dashed,blue] (13.5,-1) to (13.5,3);
    \draw [ultra thick] (7,1) circle [radius=0.2];
      \draw [fill=black, ultra thick] (9.5,1) circle [radius=0.2];
   \draw [ultra thick] (10.5,1) circle [radius=0.2];
     \draw[->,dashed] (-1,2.3) to [out=-20,in=135] (.5,1.5);
   \node [above right] at (-.2,2) {$\alpha$};
   \draw [fill=black, ultra thick] (-1.3,2.5) circle [radius=0.2];
     \draw[->,dashed] (10.5,1.5) to [out=45,in=200] (12,2.3);
     \node [above left] at (11.2,2) {$\beta$};
           \draw [fill=black, ultra thick] (12.3,2.3) circle [radius=0.2];
            \node  at (8.25,1) {$\cdots$};  \node  at (3.75%
,1) {$\cdots$};
      \node [above] at (6.5,1.8) {$1$};
      \draw[->,thick] (6.1,1.5) to [out=45,in=135] (7,1.5);
        \node [above] at (5.5,1.8) {$q$};
            \draw[<-] (5,1.5) to [out=45,in=135] (5.9,1.5);
                 \node [above] at (10,1.8) {$1$};
                \draw[->,thick] (9.6,1.5) to [out=45,in=135] (10.4,1.5);
               \node [above] at (9,1.8) {$q$};
                 \draw[<-] (8.4,1.5) to [out=45,in=135] (9.4,1.5);
       \draw[<-] (-1,-.3) to [out=0,in=-135] (.5,0.6);
   \node [below right] at (-.2,0) {$\gamma$};
     \draw [fill=black, ultra thick] (-1.3,-.2) circle [radius=0.2];
    \node [above] at (0.5,0) {$1$};
    \node [above] at (1.5,0) {$2$};
   \node [above] at (2.5,0) {$3$};
     \node [above] at (3.75,0) {$\cdots$};
   \node [above] at (5,0) {$k-1$};
       \node [above] at (7,0) {$k+1$};
        \node [above] at (8.25,0) {$\cdots$};
          \node [above] at (9.5,0) {$L-1$};
        \draw[<-] (10.5,.6) to [out=-45,in=180] (12,-.3);
   \node [below left] at (11.2,0) {$\delta$};
      \draw [fill=black, ultra thick] (12.3,-.2) circle [radius=0.2];
\end{tikzpicture}
\caption{Transition rates of the open ASEP with parameters  $0\leq q<1$, $\alpha,\beta>0$, $\gamma,\delta\ge 0$. \label{Fig1}}
\end{figure}

 We use the standard re-parameterizations of ASEP by parameters $\A,\B,\C,\D$ such that
\begin{equation}
  \label{Greek2ASEP}
  \alpha=\tfrac{1-q}{(1+\A)(1+\C)}, \quad \beta=\tfrac{1-q}{(1+\B)(1+\D)}, \quad \gamma=-\A \C \alpha,\quad \delta=-\B \D \beta,
\end{equation}
where  we choose $\A=\kappa_+(\alpha,\gamma)$, $\B=\kappa_+(\beta,\delta)$ to be nonnegative,  and then $\C=\kappa_-(\alpha,\gamma),\D=\kappa_-(\beta,\delta)$ are in $(-1,0]$.
The  notation
$$\kappa_{\pm}(u,v)=\frac{1-q-u+v\pm\sqrt{(1-q-u+v)^2+4u v}}{2u} $$
for the solutions of the pair of uncoupled quadratic equations \eqref{Greek2ASEP} is now standard; see \cite[(74)]{essler1996representations} \cite[(4.2)]{sandow1994partially} or \cite[(1.4)]{corwin-knizel2021stationary}. Parameters $\A,\B,\C,\D$ determine the related family of Askey-Wilson polynomials \cite{Bryc-Wesolowski-2015-asep,Uchiyama-Sasamoto-Wadati-2004,wang2023askey}. However, this relation does not play a direct role here.

\subsubsection{Liggett's condition}
In parametrization \eqref{Greek2ASEP}, the celebrated Liggett's condition
\begin{equation}
  \label{Liggets-greek}
  \gamma=q(1-\alpha) \mbox{ and  }\delta=q(1-\beta)
\end{equation} becomes  $\C=\D=-q$ and
\eqref{Greek2ASEP} simplifies to
\begin{equation*}\label{AB=..}
 \A=\frac{1-\alpha}{\alpha},\quad \B =\frac{1-\beta}{\beta}.
\end{equation*}
In our results we assume Liggett's condition and $\alpha,\beta\leq 1$. Thus,  in a slight extension of \cite{Bryc-Zatitskii-2024}, we consider $\A,\B\geq 0$.

As usual, we identify the particle configurations with the sequences of $0$  and $1$ that mark the vacant and occupied sites. The stationary measure of ASEP is then a discrete probability measure $\mu$ on $\Omega_L:=\{0,1\}^L$.
With some abuse of notation,  we write $\mu(\vv \tau)$ for the value assigned to the sequence $\vv \tau=(\tau_1,\dots,\tau_L)\in\Omega_L$; that is, we treat $\mu$ as a function defined on the union $\bigcup_{L}\Omega_L$, with the parameter $L$ implicitly determined by the length of the sequence $\vv \tau$.

\subsection{Notation}
 We use boldface notation, such as $\vv \sigma$, for sequences and standard font for the corresponding values, as in $\sigma_0,\dots,\sigma_r$.

The $q$-Pochhammer symbol is $ (a;q)_n=\prod_{k=0}^{n-1} (1-a q^k)$ and the $q$-number is $[n]_{q} :=1+q+\dots +q^{n-1}$.  For $q\ne 1$ the $q$-differentiation operator  (Jackson derivative) is
\begin{equation}
    \label{Dq-def} (\Dq f)(z) := \frac{f(z)-f(q z)}{z(1-q)} \; \mbox{ for } z\ne 0.
\end{equation}
(This standard notation appears, for example, in \cite[Chapter 1]{Koekoek} or \cite{gasper2004basic}.) We write $\Dq z$ for the operator that assigns  a function $\Dq[z f(z)]$ to a function $f$.

 A composition $\vv \sigma=(\sigma_0,\sigma_1,\dots,\sigma_r)$ of nonnegative integer $L+1$ into $r+1$ parts is a sequence of $r+1$ strictly positive integers such that
$$L+1=\sigma_0+\dots+\sigma_r.$$
Observe that the sequence of elements in the composition is significant, distinguishing it from the notion of a partition. For example, the partition \(\lambda = \{1,2,2\}=\langle1,2^2\rangle\) of 5 can result in three different compositions: \((1,2,2)\), \((2,1,2)\), or \((2,2,1)\).
(Refer to, for instance, \cite[page 14]{Stanley1997v1} or \cite[Definition I.9]{Flajolet2009}).

With a composition $\vv \sigma$ of $L+1$ we shall associate a polynomial $ w_{\vv \sigma}$ in variable $z$ which for $z \ne 0,1,1/q,1/q^2,\dots$ is given by
\begin{equation}
  \label{w2Dq}
 w_{\vv \sigma}(z)=(z;q)_{L+2}\prod_{j=0}^r \left( (\Dq z)^{\sigma_j-1}\Dq\right) [\tfrac{1}{1-z}].
\end{equation}
A priori, this is a rational function, but in Lemma \ref{lem1} we show that this is indeed a polynomial in variable $z$, so the expression $w_\vv\sigma(z)$ is defined for all real $z$.
 We also show that the coefficients are nonnegative  and that for $|z|<1$  there is an alternative expression
 \begin{equation}\label{w-def}
 w_{\vv \sigma}(z)=(z;q)_{L+2}\sum_{n=0}^\infty z^n \prod_{j=0}^r ([n+j+1]_q)^{\sigma_j}.
  \end{equation}
  In particular, $w_{\vv \sigma}(0)=\prod_{j=0}^r [j+1]_q^{\sigma_j}$, so $w_{\vv \sigma}(z)>0$ for $z\geq 0$.
  We also see that for all $\vv \sigma$ and $z$ we have $ w_{\vv \sigma}(z)=1$ for $q=0$, as then $[n+j+1]_q=1$.
 \arxiv{The extension of expression \eqref{w2Dq} to all real $z$ is not explicit. For example,  to see that $w_\sigma(0)=\prod_{j=0}^r [j+1]_q^{\sigma_j}$ we use \eqref{w-def}.

 Another value of interest is $w_{\vv\sigma}(1)=[L+1]_q!$ for all $\vv\sigma$, which is obtained by taking the limit of $w_{\vv \sigma}(z)$ as $z\to 1^-$. This argument relies on calculation $w_{(1,1,\dotsm,1)}(z)= [L+1]_q!$, which follows from \eqref{w2Dq} by \eqref{Dq-prop}, and then on two bounds
 $$
 w_{(L+1)}(z)\leq w_{\vv\sigma}(z)\leq w_{(1,1,\dots,1)}(z), \quad 0\leq z<1,
 $$
 and
 $$
 w_{(1,1,\dots,1)}(z)-w_{(L+1)}(z)\leq \frac{L q}{(1-q)^{L+1}}(1-z),\quad  0\leq z<1,
 $$
 which follow from \eqref{w-def}.
}
\subsection{The two layer ensemble}
We now introduce the two-layer ensemble, which is defined by a nonnegative weight function $Q$ on $\Omega_L\times\Omega_L$, normalized to give a probability measure.
For $(\vv \tau,\vv\xi)\in \Omega_L\times\Omega_L$ we write the weight function
$
Q \left(\begin{matrix}\vv \tau \\ \vv \xi \end{matrix}\right)
$ as a function of the top layer $\vv \tau$ and the bottom layer $\vv \xi$, as drawn on the right side of Fig. \ref{Fig:2L}. (This two-layer notation is vital for the statement of Theorem \ref{thm2}.  In this notation, $\tau_i=1$ if the $i$-th location in the top layer is occupied, which is marked by the black disk in Fig. \ref{Fig:2L}, and $\xi_j=1$ if the $j$-th location in the bottom layer is occupied.) As previously we do not write parameter $L$ explicitly, so we treat $Q$ as a mapping defined on $\bigcup_L\Omega_L\times\Omega_L$.

Our expression for $Q$ is based  on a  {\em random walk path}   $\vv \gamma=\vv \gamma(\vv \tau,\vv \xi)$, defined as the difference of partial sums in the top and the bottom layers:
\begin{equation}\label{gamma:def}
 \vv \gamma=(\gamma_0,\dots,\gamma_L)=\left(0,\tau_1-\xi_1, \dots, \sum_{j=1}^L (\tau_j-\xi_j)\right).
\end{equation}   We will draw this path as a piecewise linear curve, as indicated at the top of Fig. \ref{Fig:2L}. We note that our terminology here differs slightly from \cite[Section 2.5]{derrida04asymmetric} and \cite[Section 2.3]{Barraquand2023Motzkin} as  our random walk paths start at zero and can be negative.

Our formula  depends on the end point $\gamma_L$ of the path, its minimum
$$\min(\vv \gamma):=\min \left\{\sum_{j=1}^k(\tau_j-\xi_j): {0\leq k\leq L}\right\},$$ and on the induced  composition  $\vv \sigma=\sigma(\vv\gamma)=(\sigma_0,\dots,\sigma_r)$ of $L+1$ into $r+1$ parts with $r=\max (\vv \gamma)-\min(\vv \gamma)$, and
\begin{equation}\label{def:sigma}
 \sigma_i=\#\left\{j: \gamma_j=\min(\vv \gamma)+i\right\}, \quad i=0,\dots,r.
\end{equation}
There are exactly $3^L$ possible paths as $\vv \gamma$  has
possible increments $+1,0,-1$ in each step. So the mapping $\vv \gamma\mapsto \vv \sigma=\sigma(\vv \gamma)$ maps the set of $3^L$ paths $\vv \gamma$ onto the set of $2^L$ compositions $\vv \sigma$.

\arxiv{Every composition $\vv \sigma$ of $L+1$ arises from at least one
path
$\vv \gamma$. For example, every composition $\vv\sigma$ corresponds to a unique nondecreasing  path given by $$\gamma_0=\dots=\gamma_{\sigma_0-1}=0, \gamma_{\sigma_0}=\dots=\gamma_{\sigma_0+\sigma_1-1}=1, \dots,
\gamma_{\sigma_0+\sigma_1+\sigma_{r-1}}=\dots=\gamma_{\sigma_0+\sigma_1+\sigma_{r-1}+\sigma_r-1}=r.$$
We also remark that $\vv \sigma$ can be obtained by relabeling the non-zero entries of the sequence $(k_j)_{j=-L,\dots,L}$ that describes the empirical measure of a path,
$$\sum_{j=0}^L \delta_{\gamma_j}=\sum_{j=-L}^L k_j \delta_j.$$
That is, $\sigma_i=k_{\min(\vv\gamma)+i}$, $i=0,\dots,r$.
}

With the above notation, for $\A,\B\geq 0$, we define the two-layer weight function by
\begin{equation}
  \label{Q-def}
  Q \left(\begin{matrix}\vv \tau \\ \vv \xi \end{matrix}\right)=\frac{\B^{\gamma_L}}
  {(\A\B)^{\min(\vv \gamma)}} w_{\sigma(\vv \gamma)}(\A\B),
\end{equation}
  where $\vv \gamma=\gamma(\vv\tau,\vv\xi)$   is given by \eqref{gamma:def}. Since $Q$ depends only on $\vv \gamma$ and the function $(\vv\tau,\vv\xi)\mapsto \gamma(\vv\tau,\vv\xi)$ is injective, one could write the weight as a function of $(\vv \tau,\vv\gamma)$. This would lead to an equivalent and perhaps more intuitive form of Theorem \ref{thm1},  which  we discuss in Section \ref{sec:2ndproof}.  However, it would be difficult to state Theorem \ref{thm2}  without the two-layer notation.

Recall that $\min (\vv \gamma)\in\ZZ_{\leq 0}$, so by Lemma \ref{lem1},  expression \eqref{Q-def} defines $Q$ as a polynomial in $\A,\B$ with nonnegative coefficients. Since $ w_{\vv \sigma}(\A\B)\geq w_{\vv \sigma}(0)>0$, we see that $Q>0$ if $\A,\B>0$ and that $Q$ is not identically $0$ even if $\A=0$ and $\B=0$.
The two-layer ensemble
\begin{equation}
  \label{PTL1}
  P_{\rm TL}\left(\begin{matrix}\vv \tau \\ \vv \xi \end{matrix}\right)=\frac{1}{Z} Q \left(\begin{matrix}\vv \tau \\ \vv \xi \end{matrix}\right), \quad Z=Z_L(\A,\B,q)=\sum_{\vv \tau,\vv \xi \in \Omega_L} Q \left(\begin{matrix}\vv \tau \\ \vv \xi \end{matrix}\right),
\end{equation}
is a probability measure on $\Omega_L\times\Omega_L$ obtained by normalizing $Q$.  The normalization (partition function) $Z_L(\A,\B,q)$ is a nonzero polynomial in variables $\A,\B$ with nonnegative coefficients.

Our main result represents stationary measure of ASEP as the top marginal of the two layer ensemble.
The  formula covers both  the shock and the fan regions.
\begin{theorem}
  \label{thm1}
  If  $0<\alpha,\beta\leq 1$ and Liggett's condition \eqref{Liggets-greek} holds, then the invariant measure $\mu$ of ASEP is a marginal of the top layer of the two layer ensemble,
  \begin{equation}
    \label{TL2mu}
      \mu(\vv \tau)=\sum_{\vv \xi \in \Omega_L} P_{\rm TL}\left(\begin{matrix}\vv \tau \\ \vv \xi \end{matrix}\right), \quad \vv \tau\in\Omega_L.
  \end{equation}
\end{theorem}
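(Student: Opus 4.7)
My plan is to match the candidate marginal
$\widetilde\mu(\vv\tau):=Z_L^{-1}\sum_{\vv\xi\in\Omega_L} Q(\vv\tau,\vv\xi)$
with the Derrida--Evans--Hakim--Pasquier (DEHP) matrix product formula for the stationary measure of open ASEP. Under Liggett's condition this formula reads
$\mu(\vv\tau)\propto \langle W|\prod_{i=1}^L(\tau_i D+(1-\tau_i)E)|V\rangle$,
where $D,E$ satisfy $DE-qED=D+E$ and the boundary vectors $\langle W|,|V\rangle$ obey eigen-equations whose eigenvalues encode $\A$ and $\B$. It therefore suffices to exhibit one concrete representation $(D,E,\langle W|,|V\rangle)$ in which this matrix element reduces, term by term, to the explicit sum defining $\widetilde\mu$.

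First I would rewrite the inner sum as a sum over lattice paths. For fixed $\vv\tau$, the map $\vv\xi\mapsto\vv\gamma$ from \eqref{gamma:def} is a bijection onto the set of paths whose $i$-th increment lies in $\{\tau_i,\tau_i-1\}$: up or level at sites with $\tau_i=1$, and level or down at sites with $\tau_i=0$. Each such path contributes $\B^{\gamma_L}(\A\B)^{-\min(\vv\gamma)}w_{\sigma(\vv\gamma)}(\A\B)$, which splits into a boundary factor depending only on $(\gamma_L,\min(\vv\gamma))$ and an internal factor $w_{\sigma(\vv\gamma)}(\A\B)$ determined by the induced composition $\vv\sigma$ of $L+1$.

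Next I would build a transfer operator on a state space indexed by the current height of $\vv\gamma$ relative to its running minimum together with the partial composition accumulated so far, split according to $\tau_i\in\{0,1\}$ into two operators $E$ and $D$. A step either keeps the height fixed (which increments the current entry of $\vv\sigma$) or changes it by $\pm 1$ (which opens a new entry or merges into an earlier one). This is precisely the combinatorics encoded by iterates of the Jackson derivative in \eqref{w2Dq}, so the recursion for $w_{\vv\sigma}$ promised in the abstract should translate directly into the algebra identity $DE-qED=D+E$, while the behavior of $w_{\vv\sigma}$ under incrementing $\sigma_0$ or $\sigma_r$ should yield the boundary eigen-equations and supply the prefactors $(\A\B)^{-\min(\vv\gamma)}$ and $\B^{\gamma_L}$ through powers of $\A$ and $\B$ injected by $\langle W|$ and $|V\rangle$.

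The hard part will be choosing this representation cleanly, since $\A,\B$ must appear both in the boundary prefactor and inside the argument $\A\B$ of $w_{\vv\sigma}$. A natural ansatz is to realize $D,E$ as shifts in the height index combined with Jackson $q$-differentiation in an auxiliary variable $z$ later evaluated at $z=\A\B$, with $\langle W|$ and $|V\rangle$ providing the missing powers of $\A$ and $\B$. Once the representation is pinned down, the series form \eqref{w-def} and elementary manipulations with $\Dq$ should verify the algebra and boundary relations, and \eqref{TL2mu} then follows after normalizing by $Z_L$. A direct verification of stationarity via the ASEP master equation, using a coupling between top-layer transitions and moves of the path $\vv\gamma$ whose contributions cancel pairwise by the recursion for $w_{\vv\sigma}$, is a fallback approach should a matrix representation of the required form prove elusive.
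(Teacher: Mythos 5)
Your overall strategy -- encode the bottom-layer sum as a transfer-operator/matrix-product computation and verify the DEHP relations $DE-qED=D+E$ together with boundary eigen-relations, with the Jackson derivative supplying the algebra -- is in the same spirit as the paper's first proof, which verifies weight-function analogues of exactly these relations (the recursions \eqref{Qa}--\eqref{Q:bulk}, reduced after summation to the basic weight equations \eqref{pa}--\eqref{pbulk}) using the commutation identity \eqref{D-q-comm}. But as written your argument has two genuine gaps. First, the central step is only asserted, not done: you say the recursion for $w_{\vv\sigma}$ ``should translate directly'' into the algebra and boundary relations, but this translation is where essentially all the work lies. One must track precisely how the composition $\sigma(\vv\gamma)$ changes when a site is prepended, appended, or inserted in the bulk, and the bookkeeping splits into a generic case (handled by the commutator $\Dq z-q\,z\Dq=1$ inserted between the operator strings in \eqref{w2Dq}) and several exceptional cases in which the number of parts of $\vv\sigma$ changes and the prefactor $\B^{\gamma_L}(\A\B)^{-\min(\vv\gamma)}$ transforms differently (e.g.\ when the minimum of $\vv\gamma$ sits at an endpoint, or when the inserted pair crosses the running minimum), leading to modified identities such as $\wt w_{\vv\sigma^{(0)}}-q\,z\,\wt w_{\vv\sigma^{(1)}}=\wt w_{\vv\sigma}$ rather than the generic one. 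Without carrying out this case analysis (or an equivalent explicit construction of $D,E,\langle W|,|V\rangle$ and verification on a spanning set of states), the proof is a plan rather than a proof.

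Second, you have missed a normalization subtlety that blocks the naive matching. The weight $Q$ of \eqref{Q-def} does \emph{not} itself satisfy DEHP-type relations uniformly in $L$; one must pass to $\wt Q$ of \eqref{wt Q}, which carries the $L$-dependent factor $(\A\B;q)_2/(\A\B;q)_{L+2}$. In the shock region this factor can change sign with $L$ and has poles at $\A\B\in\{1,q^{-1},q^{-2},\dots\}$, which is consistent with the known singular behaviour of the matrix ansatz when $\A\B q^{N+2}=1$. Consequently the matrix-product (or basic-weight-equation) identification can only be established in the nonsingular case, and the extension to all $\A,\B\ge 0$ claimed in the theorem requires a separate argument: here one uses that $Q$ itself is a polynomial in $\A,\B$ with nonnegative coefficients (Lemma \ref{lem1}) together with continuity/analyticity of $\mu$ in the parameters. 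Your proposal neither distinguishes $Q$ from $\wt Q$ nor addresses the singular set, so even granting the representation, the argument as outlined would not yet cover the full range of parameters in the statement. (For the fan region $\A\B<1$ there is a shorter route, used in the paper's second proof: start from the Motzkin-path weight of Barraquand--Le Doussal and sum over vertical translates of a path shape to produce $w_{\vv\sigma}(\A\B)$ via \eqref{w-def}; this avoids the algebraic verification entirely but still needs the analytic continuation step for $\A\B\ge 1$.)
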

We note that since $ w_{\vv \sigma}(z)=1$ for $q=0$, the above expression generalizes to $q>0$ the two-line ensemble representation in \cite{Bryc-Zatitskii-2024}. %

For the fan region $\A\B<1$, Theorem \ref{thm1} can be obtained quite directly from \cite[Section 2.3]{Barraquand2023Motzkin}.
We present this argument in Section \ref{sec:2ndproof}.
However, it is of interest to deduce the result from the recursion for the two-layer weight function $Q$ which leads to the so-called {\em basic weight equations}.
According to \cite[Theorem 1]{brak2006combinatorial}, the stationary measure $\mu$ of ASEP is
\begin{equation}
  \label{P:ASEP}
  \mu(\vv \tau)=\frac{1}{\mathsf{Z}_{L}} \Phi(\vv \tau), \quad \vv \tau\in \Omega_{ L},
\end{equation}
where
$$Z_L=\sum_{\vv \tau\in \Omega_{ L}}  \Phi(\vv \tau)$$ is the normalization constant and the {\em basic weight function} $\Phi:\bigcup\Omega_L\to \RR$ satisfies four  {\em basic weight equations}. When
specified to Liggett's condition and written in our parametrization, these  basic weight equations are
\begin{eqnarray}
\label{pini}    \Phi(\emptyset)&=&1 \\
\label{pa}    \Phi(0,\vv\tau) -q\, \A\, \Phi(1,\vv\tau)&=&(1+\A)\Phi(\vv\tau) \\
\label{pb}    \Phi(\vv\tau, 1)-q\,\B\, \Phi(\vv\tau,0) &=& (1+\B) \Phi(\vv \tau) \\
\label{pbulk}    \Phi(\vv\tau_1, 1,0,\vv\tau_2)-q \, \Phi(\vv\tau_1,0,1,\vv\tau_2) &=&
 \Phi(\vv\tau_1, 0, \vv\tau_2)+ \Phi(\vv\tau_1, 1,\vv\tau_2) .
\end{eqnarray}
It is known that if these equations have a solution which is not identically zero and the values of $\Phi$ have the same sign for a given $L$, then  \eqref{P:ASEP} gives the stationary measure of ASEP. It is also known that in the singular case where $\A\B\C\D q^N=1$ for some $N=1,2,\dots$, (which under Liggett's condition becomes $\A\B q^{N+2}=1$) the solution is zero for  $L\geq N+1$, see  \cite[Appendix A]{Mallick-Sandow-1997}   and \cite[Theorem 1]{Bryc-Swieca-2018}.
According to \cite[Remark 2]{Bryc-Swieca-2018}, in the nonsingular case, all solutions have the same sign, so \eqref{P:ASEP} still holds, with possibly negative normalization $Z_L$.
In our  proof of Theorem \ref{thm1} we give an explicit nonzero expression \eqref{Phi-sol} for $\Phi$ in the nonsingular case, so it is clear that the solution exists and that normalization yields positive probability measure $\mu$. %

For $\A \B \not\in \{1,1/q,1/q^2, \dots\}$, and $\vv\tau,\vv\xi\in\Omega_L$, $L=0,1,\dots,$ consider
\begin{equation}
  \label{wt Q}
  \wt Q \left(\begin{matrix}\vv \tau \\ \vv \xi \end{matrix}\right):=
\frac{(\A\B;q)_2}{(\A\B;q)_{L+2}} Q \left(\begin{matrix}\vv \tau \\ \vv \xi \end{matrix}\right)
\end{equation}
 with  $\wt Q(\emptyset)=1$ for $L=0$. Since normalization cancels out any common (nonzero) factor, it is clear that \begin{equation}\label{wtW2PTL}
    P_{\rm TL} = \frac{1}{\wt Z} \wt Q.
  \end{equation}
However, note that while $Q\geq 0$, the sign of $\wt Q$ may vary with $L$ for $\A\B q^2>1$. This aligns with \cite[Remark 2]{Bryc-Swieca-2018} and is essential for deriving the basic weight equations for $\Phi$.

Our second main result is the following version of the basic weight equations for the two-layer ensemble.
\begin{theorem}\label{thm2} For $\A\B \not\in \{1,1/q,1/q^2,\dots\}$,  the two-layer weight function $\wt Q$
satisfies the following boundary and bulk equations:
\begin{equation}
  \label{Qa}
  \wt Q \left(\begin{matrix}0 &\vv \tau \\ \xi' &\vv \xi \end{matrix}\right)
  - q\, \A \wt Q \left(\begin{matrix}1 &\vv \tau \\ \xi' &\vv \xi \end{matrix}\right)
  =\A^{\xi'} \wt Q \left(\begin{matrix} \vv \tau \\  \vv \xi \end{matrix}\right),
\end{equation}
\begin{equation}
  \label{Qb}
    \wt Q \left(\begin{matrix}\vv \tau & 1 \\ \vv \xi & \xi' \end{matrix}\right)
  - q\,\B \wt Q \left(\begin{matrix}\vv \tau & 0 \\ \vv \xi & \xi' \end{matrix}\right)
  =\B^{1-\xi'} \wt Q \left(\begin{matrix} \vv \tau \\  \vv \xi \end{matrix}\right),
\end{equation}
\begin{equation}
  \label{Q:bulk}
  \wt Q \left(\begin{matrix}\vv \tau_1 & 1 &0 &\vv \tau_2 \\ \vv \xi_1&\xi'&\xi'' & \vv \xi_2 \end{matrix}\right)
  -q\,\wt Q \left(\begin{matrix}\vv \tau_1 & 0 &1 &\vv \tau_2 \\ \vv \xi_1&\xi'&\xi'' & \vv \xi_2 \end{matrix}\right) = \wt Q \left(\begin{matrix}\vv \tau_1 & 1-\xi''  &\vv \tau_2 \\ \vv \xi_1&\xi'&  \vv\xi_2 \end{matrix}\right),
\end{equation}
where $\xi',\xi''\in\{0,1\}$  and $\vv \xi,\vv \tau, (\vv \tau_1,\vv\tau_2)$, $(\vv \xi_1,\vv  \xi_2)\in\Omega_L$, $L=0,1,\dots$.
\end{theorem}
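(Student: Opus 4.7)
\emph{Proof plan.} The plan is to establish each of the three equations as a power-series identity in $z=\A\B$ valid on $|z|<1$, using the series form \eqref{w-def} of $w_{\vv\sigma}$, and then extend by analytic continuation. After reindexing the composition $\vv\sigma(\vv\gamma)$ in terms of the level-visit counts $n_j(\vv\gamma):=\#\{k:\gamma_k=j\}$ (so that $\sigma_i = n_{\min(\vv\gamma)+i}$) and shifting the summation index $N = n-\min(\vv\gamma)$, formula \eqref{w-def} becomes
$$\wt Q\left(\begin{matrix}\vv\tau \\ \vv\xi\end{matrix}\right) = (z;q)_2\,\B^{\gamma_L} \sum_{N\ge -\min(\vv\gamma)} z^{N} \prod_{j\in\ZZ} [N+j+1]_q^{n_j(\vv\gamma)},\qquad |z|<1.$$
The single identity driving the entire proof is the elementary $q$-commutator $[k+1]_q - q[k]_q = 1$.

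For the boundary equations \eqref{Qa} and \eqref{Qb}, prepending or appending one column modifies $\vv\gamma$ by one step. A case split on $\xi'\in\{0,1\}$ and, where necessary, a shift of the summation variable by $\pm 1$ align the ranges of the three sums and reduce the coefficient of the common product $\prod_j[N+j+1]_q^{n_j^R}$ in the LHS to $[N+b+1]_q - q[N+b]_q = 1$ for an appropriate integer $b$. The index shift, combined with $\A\B = z$, converts the residual power of $z$ into the correct factor $\A^{\xi'}$ or $\B^{1-\xi'}$ on the RHS. When a shift extends the summation one step below $-\min(\vv\gamma^R)$, the extra term contains $[0]_q=0$ and so vanishes harmlessly.

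For the bulk equation \eqref{Q:bulk}, denote the three paths by $\vv\gamma^A$ (top block $1,0$), $\vv\gamma^B$ (top block $0,1$), and $\vv\gamma^R$ (top block $1-\xi''$). A direct check shows that all three share a common head of length $L_1$, all reach the same level $e = h+1-\xi'-\xi''$ at the end of the middle segment (where $h$ is the head's final value), share the same tail, and have the same $\gamma_L$. Their visit counts satisfy $n_j^A = n_j^R + \delta_{j,h+1-\xi'}$ and $n_j^B = n_j^R + \delta_{j,h-\xi'}$, so substituting into the series gives
$$\wt Q^A - q\wt Q^B = (z;q)_2\,\B^{\gamma_L} \sum_N z^N \bigl([N+h+2-\xi']_q - q[N+h+1-\xi']_q\bigr)\prod_j[N+j+1]_q^{n_j^R} = \wt Q^R$$
by the key $q$-identity. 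The only subtle sub-case is $\xi'=1,\xi''=0,h=\min(\vv\gamma^R)$, in which $\min(\vv\gamma^B) = \min(\vv\gamma^R)-1$ and the summation ranges initially disagree; again the discrepancy term carries $[0]_q=0$ and is harmless.

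Finally, each side of each identity is a rational function of $\A,\B$ whose denominator divides a product of $(\A\B;q)_k$-factors, nonzero on $\{\A\B\notin\{1,1/q,1/q^2,\dots\}\}$, so identities valid on $\{|\A\B|<1\}$ extend to the full hypothesis. The main bookkeeping obstacle is tracking the reindexing of the sums together with the exceptional sub-case in the bulk equation; every substantive step reduces to the single $q$-identity $[k+1]_q - q[k]_q = 1$.
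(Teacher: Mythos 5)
Your proposal is correct, and it takes a genuinely different route from the paper's proof. The paper works with the operator representation \eqref{w2Dq'}, inserting the $q$-commutator $\Dq z-q\,z\Dq=1$ of \eqref{D-q-comm} into the product of Jackson-derivative operators, and it must separately track how the prefactor $g_{\A,\B}$ and the composition $\vv\sigma$ change (Tables \ref{TabLeft}--\ref{TabT1}), which forces a list of exceptional cases ($m=0$, $M=0$, $m=\gamma_L$, $M=\gamma_L$, and the bulk case $J=0$) each proved with a modified identity such as \eqref{Qexcept} or \eqref{w-exc-right}. You instead work with the convergent series \eqref{w-def} for $\A\B<1$, reindex by absolute level so that $\min(\vv\gamma)$ enters only through the lower limit of summation and the prefactor is absorbed into $\B^{\gamma_L}$ and the range, and reduce everything termwise to the scalar identity $[k+1]_q-q[k]_q=1$; your visit-count computations ($n_j^A=n_j^R+\delta_{j,h+1-\xi'}$, $n_j^B=n_j^R+\delta_{j,h-\xi'}$, same endpoint, same minimum except precisely when $\xi'=1$, $\xi''=0$, $h=\min\vv\gamma^R$ — correctly forced by $\xi''=0$, since $\xi''=1$ would make $\vv\gamma^R$ itself reach $h-1$) check out, and the range mismatches in all exceptional cases are indeed killed by a factor $[0]_q=0$ after extending the shifted sum by one term, so the paper's separate exceptional identities are absorbed automatically. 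What your route buys is a much lighter and more uniform case analysis (only the $\pm1$ index shifts and one range-extension observation), with bookkeeping close in spirit to the paper's Section \ref{sec:2ndproof}, where the sum over vertical shifts of generalized Motzkin paths plays the role of your variable $N$; what it costs is the restriction to $|\A\B|<1$ and a final extension step, which you correctly handle by noting both sides are rational in $\A,\B$ with poles only at $\A\B\in\{1,1/q,\dots\}$, whereas the paper's operator argument is an identity of rational functions valid on the stated parameter set directly. One small point worth making explicit in a full writeup: the termwise identity needs $k\geq0$ so that $[k]_q$ is meaningful, which holds on the (extended) ranges because every level $j$ carrying a factor satisfies $N+j+1\geq N+\min(\vv\gamma)+1\geq0$.
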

 In the above formulas, we write
$\begin{pmatrix}
  \vv \tau_1 & \vv \tau_2& \vv \tau_3\\
  \vv \xi_1 & \vv \xi_2& \vv \xi_3
\end{pmatrix}$
to indicate the concatenations $(\vv \tau_1,\vv \tau_2,\vv \tau_3)$ and $(\vv \xi_1,\vv \xi_2,\vv \xi_3)$ of the top and bottom rows of the  sequences of $\{0,1\}$  that indicate the locations of the particles in each row.
\arxiv
{In particular, solving the system of equations \eqref{Qa} \eqref{Qb}  for $L=0$, we see that   recursions (\ref{Qa}-\ref{Q:bulk})  can be initialized at $L=1$ with
$$\wt Q\begin{pmatrix} 0 \\
        \xi
      \end{pmatrix}= \frac{ \A^{\xi }   +q\,\A\B^{1-\xi }}{1-\A\B q^2},
   \quad \wt Q\begin{pmatrix} 1 \\
        \xi
      \end{pmatrix}  = \frac{
   \B^{1-\xi}+q\, \A^{\xi }\B}{1-\A\B q^2}.$$
   (Note the singularity when $\A\B q^2=1$.)
   The values of $Q$ computed from \eqref{Q-def}  take the following  matching form:
       $$Q\begin{pmatrix} 0 \\
        0
      \end{pmatrix}= 1+q\,\A\B,
   \quad
Q\begin{pmatrix} 0 \\
        1
      \end{pmatrix}=\A(1+q),
   \quad
    Q\begin{pmatrix} 1 \\
        0
      \end{pmatrix}  = \B(1+q), \quad
    Q\begin{pmatrix} 1 \\
        1
      \end{pmatrix}  = 1+q\,\A\B .
   $$
(This is $L=1$ case of \eqref{wt Q}. The dependence on $q$ is due to Liggett's condition \eqref{Liggets-greek})

}

 The proof of Theorem \ref{thm2} is given in Section \ref{Sec:ProofThm2}.
We now deduce Theorem \ref{thm1} from Theorem \ref{thm2}.
\begin{proof}[Proof of Theorem \ref{thm1}]
   Consider first the nonsingular case $\A,\B \in [0,\infty) \setminus \{q^{-j}: j=0,1,\dots\}$.
From \eqref{wtW2PTL}, it is clear that formula \eqref{TL2mu} will follow if  the basic weight equations (\ref{pini}-\ref{pbulk}) hold for
 \begin{equation}
   \label{Phi-sol}
    \Phi(\vv \tau) =  \sum_{\xi\in\Omega_L} \wt Q\begin{pmatrix}
   \vv \tau \\ \vv \xi
 \end{pmatrix}.
 \end{equation}
Equation \eqref{pini} holds by definition.
       Equations \eqref{pa}, \eqref{pb}, and \eqref{pbulk} are immediate consequences of
   \eqref{Qa}, \eqref{Qb} and   \eqref{Q:bulk} respectively.
   \arxiv
   {
   Indeed,
  \begin{multline*}
    \Phi(0,\vv \tau) - q\,\A\,\Phi(0,\vv \tau)
    = \sum_{\xi'\in\{0,1\}}\sum_{\vv \xi \in \Omega_L}
    \left( \wt Q \begin{pmatrix} 0& \vv \tau  \\
     \xi'&  \vv \xi  \end{pmatrix}
     - q\,\A\ \wt Q \begin{pmatrix} 1& \vv \tau  \\
     \xi'&  \vv \xi  \end{pmatrix}\right)
     \\ =
      \sum_{\xi'\in\{0,1\}} \A^{\xi'}\sum_{\vv \xi \in \Omega_L} \wt Q
   \begin{pmatrix}  \vv \tau  \\
       \vv \xi  \end{pmatrix}=(1+\A) \sum_{\vv \xi \in \Omega_L}\wt Q
   \begin{pmatrix}  \vv \tau  \\
       \vv \xi  \end{pmatrix}=(1+\A)\Phi(\vv\tau).
  \end{multline*}
  Similarly,
    \begin{multline*}
    \Phi(\vv \tau,1) - q\,\B\,\Phi(\vv \tau,0)
    =  \sum_{\vv \xi \in \Omega_L,\xi'\in\{0,1\}}
    \left(\wt Q \begin{pmatrix}
       \vv \tau & 1 \\
       \vv \xi & \xi'
    \end{pmatrix} - q\,\B\, \wt Q  \begin{pmatrix}
       \vv \tau & 0 \\
       \vv \xi & \xi'
    \end{pmatrix}\right)
    =   \sum_{\vv \xi \in \Omega_L}
    \wt Q
   \begin{pmatrix}  \vv \tau  \\
       \vv \xi  \end{pmatrix} \sum_{\xi'\in\{0,1\}} \B^{1-\xi'} =(1+\B)\Phi(\vv\tau)
    \end{multline*}
    Finally,
    \begin{multline*}
       \Phi(\vv \tau_1,1,0,\vv\tau_2) - q\, \Phi(\vv \tau_1,1,0,\vv\tau_2)
       \\
       =  \sum_{\xi',\xi''\in\{0,1\}}  \sum_{\vv \xi_1,\vv\xi_2}
       \left(
        \wt Q \begin{pmatrix}\vv \tau_1 & 1 &0 &\vv \tau_2 \\ \vv \xi_1&\xi'&\xi'' & \xi_2 \end{pmatrix}
  -q\,\wt Q \begin{pmatrix}\vv \tau_1 & 0 &1 &\vv \tau_2 \\ \vv \xi_1&\xi'&\xi'' & \xi_2 \end{pmatrix}
       \right)
       \\ =
            \sum_{\vv \xi_1,\vv\xi_2} \sum_{\xi',\xi''\in\{0,1\}} \wt Q \begin{pmatrix}\vv \tau_1 & 1-\xi''  &\vv \tau_2 \\ \vv \xi_1&\xi'&  \xi_2 \end{pmatrix}
        =   \sum_{\vv \xi_1,\vv\xi_2,\xi '}   \left(\wt Q \begin{pmatrix}\vv \tau_1 & 0&\vv \tau_2 \\ \vv \xi_1&\xi'&  \xi_2 \end{pmatrix}+
        \wt Q \begin{pmatrix}\vv \tau_1 & 1&\vv \tau_2 \\ \vv \xi_1&\xi'&  \xi_2 \end{pmatrix}\right)
       \\ =\Phi(\vv\tau_1, 0,\vv \tau_2)+\Phi(\vv\tau_1, 1,\vv \tau_2).
    \end{multline*}
   }
This proves that \eqref{TL2mu} holds in the nonsingular case.

  Since the common multiplicative factor $(\A\B;q)_2/(\A\B;q)_{L+2}$ appears in all expressions on the right-hand side of \eqref{wtW2PTL}, including the normalizing constant $\wt Z$, after cancellation we see that \eqref{TL2mu} holds for all $\A,\B \in [0,\infty) \setminus \{q^{-j}: j=2,3,\dots\}$ with $P_{\rm TL}$ given by \eqref{PTL1}. However, by Lemma \ref{lem1} expression \eqref{Q-def} is a polynomial in variables $\A,\B$ so $Q$ is well defined and nonnegative for all $\A,\B\geq 0$, and \eqref{PTL1} gives positive normalization constant $Z_L(\A,\B,q)$. The invariant measure $\mu$ is a continuous function of the parameters $\A,\B>-1$ so, by continuity, formula \eqref{TL2mu} is valid for all $\A,\B\geq 0$.
\end{proof}

\begin{remark}
Representation \cite{Bryc-Zatitskii-2024} led to simple proofs of large deviations and KPZ-fixed-point asymptotics for TASEP and allowed us to analyze fluctuations on the coexistence line in Ref. \cite{Bryc-Najnudel-Wang-2024}. It would be interesting to investigate whether  Theorem \ref{thm1} could be used in a similar way    for ASEP.

 Of course, most of these results are known for the general ASEP. The large deviations for ASEP in the form that distinguishes between the fan region $\A\B<1$ and the shock region $\A\B> 1$ are known from \cite{derrida2003-Exact-LDP}. The KPZ-fixed-point limit for ASEP with no restrictions on the parameters appeared in \cite{wang2024asymmetric}. The asymptotic regime for the convergence of the height function of ASEP in the fan region $\A\B<1$ to the stationary measure of the open KPZ equation is described in \cite{corwin-knizel2021stationary}.  In order to use Theorem \ref{thm1} to recover some of these results, one would need to investigate the asymptotic properties of polynomials $ w_{\vv \sigma}(z)$ for random compositions $\vv \sigma$ under the uniform law on $\Omega_L\times\Omega_L$. Unfortunately,  the literature seems to concentrate on random compositions $\vv \sigma$ with a uniform law on the set of all compositions, which is not the law we have in Theorem \ref{thm1}.
 (A nice probabilistic description of uniformly distributed compositions  appears in  \cite[Section 3]{hitczenko2004multiplicity}.)

\end{remark}

\begin{remark}
 We note that the support of $P_{\rm TL}$  is $\Omega_L\times\Omega_L$ if $\A,\B>0$. If one of the parameters is 0, then the support of $P_{\rm TL}$  is  a subset of $\Omega_L\times\Omega_L$. If $\A=0$ then the measure $P_{\rm TL}$ is supported on $(\vv \tau,\vv\sigma)$ which correspond to nonnegative paths $\vv \gamma$ and if $\B=0$ then it is supported on $(\vv\tau,\vv\xi)$  such that the corresponding path $\vv\gamma=\gamma(\vv \tau,\vv \xi)$ attains its minimum at the end, $\gamma_L=m(\vv \gamma)$.
In particular, for $\A=\B=0$ measure $P_{\rm TL}$ is supported on the subset $\calC$ of $\Omega_L\times\Omega_L$ such that $\vv \gamma(\vv \tau,\vv\ \sigma)$ is a Motzkin path, i.e., $\gamma_j\geq0$ and $\gamma_L=0$.  Since $[1]_q=1$, in this case we get
\begin{equation}
  \label{P4AB=0}
  P_{\rm TL}\left(\begin{matrix}\vv \tau \\ \vv \xi \end{matrix}\right)=\begin{cases}
   \frac{1}{Z}  \displaystyle\prod_{j=1}^r ([j+1]_q)^{\sigma_j} & \mbox{ if }  \gamma(\vv \tau,\vv \xi) \mbox{ is a Motzkin path}, \\
   0 & \mbox{ otherwise}.
  \end{cases}.
\end{equation}
When $q=0$ this becomes a uniform measure on essentially the same {\em configuration space} $\calC\subset\Omega_L\times\Omega_L$ that appeared in \cite{duchi2005combinatorial}.
For general $q$, this formula gives explicit separation of two layers in \cite[(2.10)  and (2.12)]{nestoridi2023approximating} for their parameters $u=v=-q$, which is equivalent to setting our parameters $\A=\B=0$.
\end{remark}
\subsection{Comparison with another two-layer  model}\label{sec:compare}
Consider the set $\calC$ of $\Omega_L\times\Omega_L$ defined by the requirement that $\vv \gamma(\vv \tau,\vv\xi)$ is a Motzkin path. %
In a very interesting paper \cite{duchi2005combinatorial} defined a Markov evolution on   $\calC$ which coincides with the evolution of TASEP on the top layer. They also determined the invariant measure on the two layers, which becomes a uniform measure on $\calC$ when $\A=\B=0$.

The original expression for the stationary measure in \cite{duchi2005combinatorial} is quite involved. We therefore translate  the description of this measure given in \cite[Section 2.7]{Bertini-2007} into our notation.
According to \cite[Section 2.7]{Bertini-2007}, we label the sites $j\in\{1,\dots,L\}$ as W, if
$\gamma_{j-1}=\gamma_j=0$ and $\xi_j=1$ (then necessarily $\tau_j=1$) and label them B if
$\gamma_{j-1}=0$, $\xi_j=0$  and there are no W-labeled sites to the left of $j$. The remaining sites are not labeled. We denote by $N(W)$ the number of W-labeled site and by $N(B)$ the number of B-labeled sites. The two-layer measure on $\calC$ is then given by normalizing the weight function
\begin{equation}
  \label{P-Duchi}
\calQ\begin{pmatrix}
  \vv \tau \\ \vv \xi
\end{pmatrix} = (1+\A)^{N(B)}(1+\B)^{N(W)}.
\end{equation}
\begin{figure}[htb]
\begin{tabular}{lr}
  \begin{tikzpicture}[scale=.5]
  \draw [fill=black, ultra thick] (.5,1.8) circle [radius=0.2];
  \draw [ultra thick] (1.5,1.8) circle [radius=0.2];
\draw [fill=black, ultra thick] (2.5,1.8) circle [radius=0.2];
\draw [fill=black, ultra thick] (3.5,1.8) circle [radius=0.2];
\draw [ultra thick] (4.5,1.8) circle [radius=0.2];
  \draw [ultra thick] (5.5,1.8) circle [radius=0.2];
   \draw [fill=black, ultra thick] (6.5,1.8) circle [radius=0.2];
    \draw [ultra thick] (7.5,1.8) circle [radius=0.2];
      \draw [ultra thick] (8.5,1.8) circle [radius=0.2];
   \draw [ultra thick] (9.5,1.8) circle [radius=0.2];
\draw [ ultra thick] (.5,1) circle [radius=0.2];
  \draw [fill=black,ultra thick] (1.5,1) circle [radius=0.2];
\draw [fill=black, ultra thick] (2.5,1) circle [radius=0.2];
\draw [fill=black, ultra thick] (3.5,1) circle [radius=0.2];
\draw [ultra thick] (4.5,1) circle [radius=0.2];
  \draw [ultra thick] (5.5,1) circle [radius=0.2];
   \draw [fill=black, ultra thick] (6.5,1) circle [radius=0.2];
    \draw [ultra thick] (7.5,1) circle [radius=0.2];
      \draw [fill=black, ultra thick] (8.5,1) circle [radius=0.2];
   \draw [fill=black,ultra thick] (9.5,1) circle [radius=0.2];
   \draw [-, ultra thick] (0,.6) to (0,2.2);
\draw [-, thick] (1,.6) to (1,2.2);
\draw [-, thick] (2,.6) to (2,2.2);
\draw [-, thick] (3,.6) to (3,2.2);
\draw [-, thick] (4,.6) to (4,2.2);
\draw [-, thick] (5,.6) to (5,2.2);
\draw [-, thick] (6,.6) to (6,2.2);
\draw [-, thick] (7,.6) to (7,2.2);
\draw [-, thick] (8,.6) to (8,2.2);
\draw [-, thick] (9,.6) to (9,2.2);
\draw [-, ultra thick] (10,.6) to (10,2.2);
   \draw [-, thick] (0,1.4) to (10,1.4);
    \draw [-, thick] (0,.6) to (10,.6);
   \draw [-, ultra thick] (0,.6) to (10,.6);
      \draw [-, ultra thick] (0,2.2) to (10,2.2);
     \node  at (.5,0.2) {B};
      \node  at (1.5,0.2) {W};
       \node  at (9.5,0.2) {W};
\end{tikzpicture}
&
  \begin{tikzpicture}[scale=.5]
  \draw [fill=black, ultra thick] (.5,1.8) circle [radius=0.2];
  \draw [ultra thick] (1.5,1.8) circle [radius=0.2];
\draw [fill=black, ultra thick] (2.5,1.8) circle [radius=0.2];
\draw [fill=black, ultra thick] (3.5,1.8) circle [radius=0.2];
\draw [ultra thick] (4.5,1.8) circle [radius=0.2];
  \draw [ultra thick] (5.5,1.8) circle [radius=0.2];
   \draw [fill=black, ultra thick] (6.5,1.8) circle [radius=0.2];
    \draw [ultra thick] (7.5,1.8) circle [radius=0.2];
      \draw [ultra thick] (8.5,1.8) circle [radius=0.2];
   \draw [ultra thick] (9.5,1.8) circle [radius=0.2];
\draw [fill=blue, ultra thick] (.5,1) circle [radius=0.2];
  \draw [ultra thick] (1.5,1) circle [radius=0.2];
\draw [ ultra thick] (2.5,1) circle [radius=0.2];
\draw [ ultra thick] (3.5,1) circle [radius=0.2];
\draw [fill=blue,ultra thick] (4.5,1) circle [radius=0.2];
  \draw [fill=blue,ultra thick] (5.5,1) circle [radius=0.2];
   \draw [ ultra thick] (6.5,1) circle [radius=0.2];
    \draw [fill=blue,ultra thick] (7.5,1) circle [radius=0.2];
      \draw [ ultra thick] (8.5,1) circle [radius=0.2];
   \draw [ultra thick] (9.5,1) circle [radius=0.2];
   \draw [-, ultra thick] (0,.6) to (0,2.2);
\draw [-, thick] (1,.6) to (1,2.2);
\draw [-, thick] (2,.6) to (2,2.2);
\draw [-, thick] (3,.6) to (3,2.2);
\draw [-, thick] (4,.6) to (4,2.2);
\draw [-, thick] (5,.6) to (5,2.2);
\draw [-, thick] (6,.6) to (6,2.2);
\draw [-, thick] (7,.6) to (7,2.2);
\draw [-, thick] (8,.6) to (8,2.2);
\draw [-, thick] (9,.6) to (9,2.2);
\draw [-, ultra thick] (10,.6) to (10,2.2);
   \draw [-,  thick] (0,1.4) to (10,1.4);
    \draw [-, ultra thick] (0,.6) to (10,.6);
      \draw [-, ultra thick] (0,2.2) to (10,2.2);
      \draw [-,gray,dotted] (-.1,3) to (10.1,3);
       \draw [-,blue] (0,3) to (2,3);
        \draw [-,blue] (2,3) to (3,4);
        \draw [-,blue] (3,4) to (4,5);
          \draw [-,blue] (4,5) to (6,3);
           \draw [-,blue] (6,3) to (7,4);
               \draw [-,blue] (7,4) to (8,3);
                \draw [-,blue] (8,3) to (10,3);
\end{tikzpicture}
\end{tabular}
\caption{\textbf{Left:} An example of a two layer configuration as in \cite{duchi2005combinatorial} for $L=10$, with labeling of the bottom row needed for \eqref{P-Duchi}.  Formula \eqref{P-Duchi} assigns weight $\calQ\begin{pmatrix}
  \vv \tau \\ \vv \xi
\end{pmatrix}  =\ (1+\A)(1+\B)^2$ to this configuration. \textbf{Right:} The equivalent two-layer configuration in our notation with the locations of particles
$\vv \tau=(1,0,1,1,0,0,1,0,0,0)$ and $\vv \xi=(1,0,0,0,1,1,0,1,0,0)$. The random walk path $\vv\gamma=(0,0,0,1,2,1,0,1,0,0,0)$ is drawn as a continuous interpolation of the function $j\mapsto \gamma_j$ and gives composition $\vv \sigma(\vv \gamma)=(7,3,1)$.
With $q=0$, formula \eqref{Q-def} assigns the
weight $Q\begin{pmatrix}
  \vv \tau \\ \vv \xi
\end{pmatrix}=1$ that does not depends on $\A,\B$ to this configuration.
\label{Fig:2L}}
\end{figure}
In general, their invariant measure is different than ours, as it is supported only on $\calC$ for all $\A,\B$, while the support of our two-layer measure $P_{\rm TL}$ for $q=0$ and $\A,\B>0$ is the entire $\Omega_L\times\Omega_L$. However,  when $\A=\B=0$ and $q=0$ both \eqref{P-Duchi} and \eqref{P4AB=0} define the same uniform measure on $\calC$.  (To compare with their setup, our $\vv\tau\in\Omega_L$ describes the positions of particles in their top layer, and our $\vv \xi\in\Omega_L$ describes the positions of holes in the bottom layer of their model, as indicated on the left and right sides of Figure \ref{Fig:2L}.)

For $q=0$, formulas \eqref{Q-def} and \eqref{P-Duchi}  represent the same probability  $\mu(\tau)$  as the (normalized) sums of polynomials in $\A,\B$. Representation of $\mu$ based on formula \eqref{Q-def} uses monomials in $\A,\B$.  Representation based on \eqref{P-Duchi} uses monomials in $1+\A$, $1+\B$.
%\arxiv{It is not clear if one can relate the weight function \eqref{Q-def}  for $q=0$ to the weight function \eqref{P-Duchi} for $\A,\B>0$.}

 Formula
\eqref{P4AB=0}   extends \eqref{P-Duchi} to $q>0$ in the case $\A=\B=0$.  We remark that \cite{Corteel-Williams-2007-Markov,Corteel-Williams-2007-AAM} generalize \cite{duchi2005combinatorial} to $q>0$ by introducing a larger configuration space that consists  of  the {\em
staircase tableaux} that they introduce.
%\arxiv{It is not clear if one can relate the weight function \eqref{Q-def} to the weight function in \cite{Corteel-Williams-2007-Markov,Corteel-Williams-2007-AAM}.}

\section{Proofs}
Our proofs exploit the well-known properties of the Jackson derivative $\Dq$.  This operator satisfies the basic $q$-commutation identity
\begin{equation}\label{D-q-comm}
  \Dq z- q\,z \Dq=1, \quad z\in\RR\setminus\{0\},
\end{equation}
which is a special case of the $q$-product rule
\begin{equation}
  \label{q-prod}
  \Dq [f(z)g(z)]=g(z)\Dq[f(z)]+f(q z) \Dq[g(z)].
\end{equation}
It is also known (and easy to check) that
\begin{equation}
  \label{Dq-prop}
  \Dq[z^n]=[n]_q z^{n-1},\quad
    \Dq\frac{1}{(z;q)_n}=\frac{[n]_q}{(z;q)_{n+1}}, \quad n=0,1,\dots.
\end{equation}
A good reference for the definition and most of the formulas above is \cite[(1.5) and (1.10)-(1.12)]{Kac-2002}. Unfortunately, this reference does not consider the $q$-Pochhammer symbol, so  we verify the last formula in \eqref{Dq-prop} for completeness.
From $(a;q)_{n+1}=(a;q)_n(1-a q^n)$ and
$(q a;q)_n=(a;q)_{n+1}/(1-a )$ we get
$$
  \Dq\frac{1}{(z;q)_n}= \frac{\frac{1}{(z;q)_n} - \frac{1}{(qz;q)_n}}{(1-q)z}
=\frac{1}{(z;q)_n(1-q)z}\left(1-\frac{(1-z)}{1-z q^n}\right)
=\frac{1}{(z;q)_{n+1}}\frac{z(1-q^n)}{(1-q)z}
= \frac{[n]_q}{(z;q)_{n+1}}.
$$

\begin{lemma}\label{lem1}Let $\vv\sigma$ be a composition of $L+1$ into $r+1$ parts for some $L\in\ZZ_{\geq 0}$.
For $z\ne 0, 1,1/q,\dots$ and $0\leq q<1$ let $w_{\vv \sigma}$ be defined by \eqref{w2Dq}.
Then $w_{\vv \sigma}(z)$ is a polynomial in variable $z$ with nonnegative coefficients and of degree at most $L-r$.
In particular, $ w_{\vv \sigma}(z)$ extends by continuity to all real $z$.
Furthermore,   for $|z|<1$, this polynomial is given by \eqref{w-def}.
\end{lemma}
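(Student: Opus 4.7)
The plan is to combine all three assertions by iterating the operators $L_j:=(\Dq z)^{\sigma_j-1}\Dq$ applied to $\tfrac{1}{1-z}=\tfrac{1}{(z;q)_1}$ and tracking the shape of the partial results. For the series formula first: for $|z|<1$ the expansion $\tfrac{1}{1-z}=\sum_{n\ge 0}z^n$ converges uniformly on compact sets and the operators act termwise via $\Dq z^n=[n]_q z^{n-1}$ and $(\Dq z)z^n=[n+1]_q z^n$. Interpreting $\prod_{j=0}^r L_j$ as operator composition with $L_r$ acting first, an induction on the number of $L_j$'s already applied shows that after $L_r L_{r-1}\cdots L_{r-k}$ the coefficient of $z^n$ equals $\prod_{i=0}^{k}[n+i+1]_q^{\sigma_{r-k+i}}$; at $k=r$ this yields $\prod_{j=0}^r[n+j+1]_q^{\sigma_j}$, and multiplication by $(z;q)_{L+2}$ produces \eqref{w-def}.

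For the polynomial structure I would maintain the invariant that, starting from after the innermost $\Dq$ has been applied, the partial result takes the form $p(z)/(z;q)_n$ with $p$ a polynomial having nonnegative coefficients and $\deg p\le n-2$. Using $\Dq\tfrac{1}{(z;q)_n}=\tfrac{[n]_q}{(z;q)_{n+1}}$ together with the $q$-product rule \eqref{q-prod} yields
\begin{equation*}
\Dq\frac{p(z)}{(z;q)_n}=\frac{p(qz)[n]_q+(1-q^n z)\Dq p(z)}{(z;q)_{n+1}},
\end{equation*}
and applying the same formula to $zp$ in place of $p$ gives the analogous expression for $(\Dq z)[p/(z;q)_n]$. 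A direct coefficientwise expansion shows that in both cases the coefficient of $z^m$ in the new numerator is a linear combination of $p_m$ and one of $p_{m\pm1}$ with coefficients $[m+1]_q\ge 0$ and
\begin{equation*}
q^m[n]_q-q^n[m]_q=\frac{q^m-q^n}{1-q}=q^m[n-m]_q,
\end{equation*}
which is nonnegative for $0\le m\le n$. Since the invariant $\deg p\le n-2$ forces all relevant indices $m$ to satisfy $m\le n-1$, every new coefficient is nonnegative; moreover $\Dq$ preserves $\deg p$ while $\Dq z$ raises it by at most $1$, so the invariant is preserved at each step.

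Finally, a degree count finishes the proof. Among the $L+1=\sum_j\sigma_j$ operators in the chain, $r+1$ are copies of $\Dq$ (one per $L_j$) and the remaining $L-r$ are copies of $\Dq z$. Each operator raises $n$ by $1$, so $n$ runs from $1$ to $L+2$, and only the $L-r$ copies of $\Dq z$ raise $\deg p$, each by at most $1$. Starting from $\deg p=0$, the final expression is $p(z)/(z;q)_{L+2}$ with $p$ nonnegative and $\deg p\le L-r$, and multiplication by $(z;q)_{L+2}$ yields $w_{\vv\sigma}(z)=p(z)$, the required polynomial. Being a polynomial it extends to all real $z$, and the series representation \eqref{w-def} for $|z|<1$ is the identity established in the first step. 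The main obstacle is the coefficient-level bookkeeping: verifying that each $\Dq z$ application preserves both nonnegativity and the invariant $\deg p\le n-2$ despite the sign-changing factor $(1-q^n z)$ in the numerator. The clean identity $q^m[n]_q-q^n[m]_q=q^m[n-m]_q$ is what makes this work and is also responsible for the sharp degree bound $L-r$, rather than the crude $L+1$ one would get from merely cancelling poles.
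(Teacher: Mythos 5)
Your proposal is correct and follows essentially the same route as the paper: both proofs track the intermediate results as nonnegative-coefficient numerators over $(z;q)_n$ (the paper phrases this as membership in the nonnegative span of $z^j/(z;q)_{L+2}$ and inducts on the number of parts, while you maintain the equivalent invariant one operator at a time, the key coefficient identity $q^m[n]_q-q^n[m]_q=q^m[n-m]_q$ being exactly the paper's computation of $\Dq$ and $\Dq z$ on $z^j/(z;q)_n$), and both obtain \eqref{w-def} by letting the operators act termwise on the geometric series (you directly, the paper via the shift $z^{r+1}/(1-z)$ and annihilation of a degree-$r$ polynomial). The degree count giving the bound $L-r$ is likewise the same.
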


\arxiv
{The degree of $ w_{\vv \sigma}(z)$ is of course $0$ when $q=0$.
The proof shows that the degree of $ w_{\vv \sigma}(z)$ is actually $L-r$ if $q>0$.
For example, with $\sigma=(1,1,\dots,1)$, that is, $L-r=0$ we have
\begin{multline*}
 w_{\vv \sigma}(z)=(z;q)_{L+2} \Dq^{L+1} \frac{1}{(z;q)_1}=
(z;q)_{L+2} \Dq^{L} \frac{[1]_q}{(z;q)_2}=(z;q)_{L+2} \Dq^{L-1} \frac{[1]_q[2]_q}{(z;q)_3}\dots =(z;q)_{L+2} \frac{[L+1]_q!}{(z;q)_{L+2}}=[L+1]_q!
\end{multline*}
On the other hand, with $\sigma=(L+1)$ i.e. $L-r=L$ polynomial
$$ w_{\vv \sigma}(z)=(z;q)_{L+2} (\Dq z)^{L} \frac{1+q}{(z;q)_2} =
(z;q)_{L+2}(1+q z \Dq)^{L}\frac{1+q}{(z;q)_2}=(z;q)_{L+2}\frac{1+q}{(z;q)_2}+ \mbox{lower order terms} $$
has degree $L$ if $q>0$. (Here we used \eqref{D-q-comm}.)
}

\begin{proof}
To prove that \eqref{w2Dq} defines a polynomial in variable $z$ with nonnegative coefficients and of degree at most $L-r$,
we prove by mathematical induction that %
\begin{equation}
  \label{ind-w}
  \frac{ w_{\vv \sigma}(z)}{(z;q)_{L+2}}
\mbox{ is in the nonnegative span of } \left\{\frac{z^j}{(z;q)_{L+2}}: j =0,\dots,L-r\right\}.
\end{equation} The induction is on the number of parts $r+1$ of composition $\vv \sigma$.

If $r=0$, then $L+1=\sigma_0$ is a composition with 1 part and the fact that $(\Dq z)^{\sigma_0-1}\Dq \tfrac{1}{1-z}$ is in the nonnegative span of $\{\frac{z^j}{(z;q)_{L+2}}: j =0,\dots,L\}$ follows by the same argument that we use in the induction step. (Alternatively, one  can start the induction with $r=-1$, where the composition of $L+1=0$ has 0 parts and $w_\emptyset(z)=1$.)

 Suppose that \eqref{ind-w} holds for some $r\geq 0$ and all integers $L\geq 0$ and all compositions $\vv \sigma$, of $L+1$ into $r+1$ parts.

Let $r'=r+1$, and let $\vv \sigma'=(\sigma'_0,\dots,\sigma'_{r'})$  be a sequence of positive integers which forms a composition of $L'+1:=\sigma_0'+\dots+\sigma'_{r'}$ into $r'+1=r+2$ parts. Then $\vv \sigma'=(\sigma_0',\vv \sigma)$ where $\vv\sigma:=(\sigma_1',\dots,\sigma_{r+1}')$ is a composition of $L+1$ into $r+1$ parts with $L:=L'-\sigma_0'$.
Formula \eqref{w2Dq} shows that
\begin{equation}
  \label{w'2w}
  \frac{w_{\vv \sigma'}(z)}{(z;q)_{L'+2}}= (\Dq z)^{\sigma_0'-1}\Dq \frac{ w_{\vv \sigma}(z)}{(z;q)_{L+2}},
\end{equation}
and by induction assumption, $ w_{\vv \sigma}(z)$ is a polynomial in $z$ of degree $L-r$ with nonnegative coefficients.
Therefore, it is enough to verify the effect of the action of the operator $(\Dq z)^{\sigma_0'-1}\Dq$ on the expressions of the form
${z^j}/{(z;q)_{L+2}}$ for $j=0,1,\dots,L-r$.

Using $q$-product formula \eqref{q-prod} and the second formula in \eqref{Dq-prop}, we see that
$$
\Dq \frac{z^j}{(z;q)_{L+2}}=
\frac{[j]_q z^{j-1}}{(z;q)_{L+2}}+ q^j z^j \frac{[L+2]_q}{(z;q)_{L+3}}
   =\frac{1}{(z;q)_{L+3}}\left(q^j [L+2-j]_qz^j +
   [j]_qz^{j-1}\right),
   $$
 an expression which is in the nonnegative span of $\frac{z^j}{(z;q)_{L+3}}$, $j=0,\dots,L-r$.
By the same argument, for $j=0,\dots,L-r$, we have
$$\Dq z \frac{z^j}{(z;q)_{L+3}}
=\frac{[j+1]_q z^{j}}{(z;q)_{L+3}}+ q^{j+1} z^{j+1} \frac{[L+3]_q}{(z;q)_{L+4}}
=\frac{1}{(z;q)_{L+4}}\left(q^{j+1}  [L+2-j]_q z^{j+1}+[j+1]_q z^j
\right)
$$
is in the nonnegative span of functions $\frac{z^j}{(z;q)_{L+3}}$, $j=0,\dots,L-r+1$. (Moreover, if $q\ne 0$ then the coefficient at $z^{j+1}$  is positive, so the resulting polynomial is of degree $L-r+1$.)

Since formula \eqref{w'2w} has  $\sigma_0'-1$ iterations of operator $\Dq z$, and each of them raises the range of the  powers of $z$ in the numerator and the length of the $q$-Pochhammer symbol in the denominator by 1, we see that if $j\in\{0,\dots,L-r\}$
then $(\Dq z)^{\sigma_0'-1}\Dq \frac{z^j}{(z;q)_{L+2}}$ is in the nonnegative span of
 $$\left\{\frac{z^j}{(z;q)_{L+2+\sigma_0'}}: j=0,\dots,L-r+\sigma_0'-1\right\}
 =\left\{\frac{z^j}{(z;q)_{L'+2}}: j=0,\dots,L'-r' \right\}.$$
This completes the induction step and ends the proof. (We also proved that if $q\ne 0$ then the degree of polynomial $ w_{\vv \sigma}(z)$ is $L-r$.)

We now prove that the right hand sides of \eqref{w2Dq} and \eqref{w-def} coincide for $0<|z|<1$. (Condition $|z|<1$   ensures convergence of the series \eqref{w-def}. Condition $z\ne0$ is required in definition \eqref{Dq-def} of $\Dq$.) Peeling off one rightmost operator at a time we get
\begin{multline*}
\prod_{j=0}^r \left( (\Dq z)^{\sigma_j-1}\Dq\right) [\tfrac{z^{r+1}}{1-z}]=
\sum_{n=0}^\infty\prod_{j=0}^r \left( (\Dq z)^{\sigma_j-1}\Dq\right) z^{n+r+1}
\\
=
  \sum_{n=0}^\infty \prod_{j=0}^{r-1} \left( (\Dq z)^{\sigma_j-1}\Dq\right) (\Dq z)^{\sigma_r-1}\Dq z^{n+r+1}
  \\=
 \sum_{n=0}^\infty \prod_{j=0}^{r-1} \left( (\Dq z)^{\sigma_j-1}\Dq\right) (\Dq z)^{\sigma_r-2} \Dq z [n+r+1]_q z^{n+r}
 \\=
  (z;q)_{L+1} \sum_{n=0}^\infty \prod_{j=0}^{r-1} \left( (\Dq z)^{\sigma_j-1}\Dq\right) (\Dq z)^{\sigma_r-2}  z^{n+r} [n+r+1]_q^2
  \\=  \sum_{n=0}^\infty \prod_{j=0}^{r-1} \left( (\Dq z)^{\sigma_j-2}\Dq\right) (\Dq z)^{\sigma_{r-1}-1}\Dq   z^{n+r} [n+r+1]_q^{\sigma_r}
 \\ =
  \sum_{n=0}^\infty \prod_{j=0}^{r-1} \left( (\Dq z)^{\sigma_j-2}\Dq\right) (\Dq z)^{\sigma_{r-1}-1}z^{n+r-1}[n+r]_q  [n+r+1]_q^{\sigma_r}
  \\ =
  \sum_{n=0}^\infty \prod_{j=0}^{r-1} \left( (\Dq z)^{\sigma_j-2}\Dq\right)  z^{n+r-1}[n+r]_q^{\sigma_{r-1}}  [n+r+1]_q^{\sigma_r}
  \\ =\dots=
    \sum_{n=0}^\infty  (\Dq z)^{\sigma_0-1}\Dq   z^{n+1}  \prod_{j=1}^{r} [n+j+1]_q^{\sigma_j} = \sum_{n=0}^\infty     z^{n}  \prod_{j=0}^{r} [n+j+1]_q^{\sigma_j}.
\end{multline*}
Noting that
$$
\frac{z^{r+1}}{1-z}=\frac{1}{1-z}+\frac{z^{r+1}-1}{1-z}
$$
differs from $(1-z)^{-1}$ by a polynomial of degree $r$,
and that $\prod_{j=0}^r \left( (\Dq z)^{\sigma_j-1}\Dq\right)$ reduces the degree of a polynomial by $r+1$ we see that \eqref{w2Dq} and \eqref{w-def} give the same expression.
\end{proof}

\subsection{Proof of Theorem \ref{thm2}}\label{Sec:ProofThm2}
Since  \eqref{Qa} and \eqref{Q:bulk}  are identities between polynomials in variables $\A,\B\geq 0$, it suffices to prove them for $\A,\B>0$ only. We therefore assume that $\A>0$ and $\B>0$  throughout the proof.
It is convenient to rewrite \eqref{wt Q} as
\begin{equation*}
\wt Q \left(\begin{matrix}\vv \tau \\ \vv \xi \end{matrix}\right)=(\A\B;q)_2  g_{\A,\B}\begin{pmatrix}
\vv \tau \\ \vv \xi
\end{pmatrix} \wt w_{\vv \sigma}(\A\B),
\end{equation*}
where $\vv \sigma=\sigma(\vv \gamma)$, with $\vv \gamma=\gamma(\vv \tau,\vv \xi)$, $\wt w_{\vv \sigma}(z)= w_{\vv \sigma}(z)/(z;q)_{L+2}$, and
\begin{equation}
  \label{q=0}
   g_{\A,\B}\begin{pmatrix}
\vv \tau \\ \vv \xi
\end{pmatrix}=\frac{\B^{\gamma_L}}{(\A\B)^{\min(\vv \gamma)}}.
\end{equation}
This allows us to separate the contribution of $\wt w_{\vv \sigma}(z)$ to the identities, and to use \eqref{w2Dq}  to complete the proofs.
 For ease of reference we note that \eqref{w2Dq} is the same as
 \begin{equation}
  \label{w2Dq'}
 \wt w_{\vv \sigma}(z)=\prod_{j=0}^r \left( (\Dq z)^{\sigma_j-1}\Dq\right) [\tfrac{1}{1-z}].
\end{equation}

To keep track of modifications that we need to apply to the top and bottom rows in the arguments of $\vv\gamma$,  we will  use the two-row  notation
$\gamma\begin{pmatrix}
  \vv \tau \\  \vv \xi
\end{pmatrix}$
instead of $\gamma(\vv \tau,\vv \xi)$. As previously, we use boldface $\vv \gamma$ for the sequence and standard font for the function $\gamma(\cdot,\cdot)$ and for the values $\gamma_0,\dots,\gamma_L$ of the sequence.

The derivations of the boundary identities \eqref{Qa},
\eqref{Qb} and the bulk identity \eqref{Q:bulk} are similar but the
details differ, and there are several exceptional cases that need
to be considered separately.

\subsection*{Boundary identities }
We fix $\vv \tau,\vv\xi\in\Omega_L$, $\vv \gamma=\gamma(\vv \tau, \vv \xi)$ and $\vv \sigma=\sigma(\vv \gamma)$. The details of the proof will depend on the values of
$m=\min(\vv \gamma)$, and $M=\max(\vv\gamma)$. We will go in detail over the generic case, where $m$, $M$ and the endpoints of $\vv\gamma$ do not coincide. We omit the arguments for the exceptional boundary cases $m=0$,  $M=0$, $m=\gamma_L$ or $M=\gamma_L$ which require (often minor) changes.
(The omitted details are in the expanded version of the paper on arxiv.)

\subsubsection{Left boundary}
We begin with the proof of \eqref{Qa}.
The first observation is that
\begin{equation*}
  \label{g0L}
  g_{\A,\B}\begin{pmatrix}
  0 & \vv \tau \\ \xi' & \vv \xi
\end{pmatrix}=\A^{\xi'}g_{\A,\B}\begin{pmatrix}
   \vv \tau \\  \vv \xi
\end{pmatrix}
\end{equation*}
and
\begin{equation}
  \label{g1L}
  g_{\A,\B}\begin{pmatrix}
  1 & \vv \tau \\ \xi' & \vv \xi
\end{pmatrix}=\begin{cases}
\B\, g_{\A,\B}\begin{pmatrix}
   \vv \tau \\  \vv \xi
\end{pmatrix} & \mbox{ if }  m=0 \mbox{ and } \xi'=0, \\
 \A^{\xi'-1}g_{\A,\B}\begin{pmatrix}
   \vv \tau \\  \vv \xi
\end{pmatrix} & \mbox{ otherwise} .   \\
\end{cases}
\end{equation}
(Recall that $\A>0$ throughout this proof.)
This follows   by going over the cases listed in Table \ref{TabLeft} and inspecting how  formula \eqref{q=0} changes in each case.

\begin{table}[htb]
  \begin{tabular}{||c||c|c||} \hline\hline
  $\xi'$ & $\vv \gamma\begin{pmatrix}
     0 &\vv \tau
    \\  \xi' & \vv \xi
  \end{pmatrix}$ & $\vv \gamma\begin{pmatrix}
     1 &\vv \tau
    \\  \xi' & \vv \xi
  \end{pmatrix}$ \\
  \hline\hline
  $0$ &  \begin{tikzpicture}[scale=1]
        \draw [-,blue,thick] (0.5,0.5) to (1,.5);
        \draw [fill=black] (1.0,.5) circle [radius=0.03];
             \draw [fill=black] (0.5,0.5) circle [radius=0.03];
         \draw [-,gray,dotted,thick] (1,.5) to (1.25,.75);
        \draw [-,gray,dotted,thick] (1.25,.75) to (1.5,.5);
                 \draw [fill=white] (1.25,-0.2) circle [radius=0.0];
                  \draw [fill=white] (1.25,1) circle [radius=0.0];
         \draw [-,gray,dotted,thick] (1.5,.5) to (1.75,-.5);
\draw [-,gray,dotted,thick] (1.75,-.5) to (1.9,1.1);
\draw [-,gray,dotted,thick] (1.9,1.1) to (2,.6);
  \end{tikzpicture}
  &

  \begin{tikzpicture}[scale=1]
        \draw [-,blue,thick] (0.5,0) to (1,.5);
        \draw [fill=black] (1.0,.5) circle [radius=0.03];
             \draw [fill=black] (0.5,0) circle [radius=0.03];
         \draw [-,gray,dotted,thick] (1,.5) to (1.25,.75);
        \draw [-,gray,dotted,thick] (1.25,.75) to (1.5,.5);
                  \draw [fill=white] (1.25,-0.2) circle [radius=0.0];
                  \draw [fill=white] (1.25,1) circle [radius=0.0];
         \draw [-,gray,dotted,thick] (1.5,.5) to (1.75,-.5);
\draw [-,gray,dotted,thick] (1.75,-.5) to (1.9,1.1);
\draw [-,gray,dotted,thick] (1.9,1.1) to (2,.6);
  \end{tikzpicture}
  \\
  \hline
  $1$ &
   \begin{tikzpicture}[scale=1]
         \draw [-,blue,thick] (0.5,.5) to (1,0);
        \draw [fill=black] (0.5,0.5) circle [radius=0.03];
             \draw [fill=black] (1,0) circle [radius=0.03];
         \draw [-,gray,dotted,thick] (1,0) to (1.25,.25);
        \draw [-,gray,dotted,thick] (1.25,.25) to (1.5,0);
          \draw [fill=white] (1.25,-0.2) circle [radius=0.0];
                  \draw [fill=white] (1.25,0.8) circle [radius=0.0];
         \draw [-,gray,dotted,thick] (1.5,0) to (1.75,-1);
\draw [-,gray,dotted,thick] (1.75,-1) to (1.9,.6);
\draw [-,gray,dotted,thick] (1.9,.6) to (2,.1);
  \end{tikzpicture}
  &
  \begin{tikzpicture}[scale=1]
        \draw [-,blue,thick] (0.5,0) to (1,0);
         \draw [-,gray,dotted,thick] (1,0) to (1.25,.25);
        \draw [-,gray,dotted,thick] (1.25,.25) to (1.5,0);

        \draw [fill=black] (1.0,0) circle [radius=0.03];
             \draw [fill=black] (0.5,0) circle [radius=0.03];
             \draw [fill=white] (1.25,-0.2) circle [radius=0.0];
                  \draw [fill=white] (1.25,0.8) circle [radius=0.0];
         \draw [-,gray,dotted,thick] (1.5,0) to (1.75,-1);
\draw [-,gray,dotted,thick] (1.75,-1) to (1.9,.6);
\draw [-,gray,dotted,thick] (1.9,.6) to (2,.1);
  \end{tikzpicture}
   \\  \hline\hline
  \end{tabular}
  \caption{ The fixed part of the curve $\vv \gamma\begin{pmatrix}
       \tau' &\vv \tau
    \\ \xi' & \vv \xi
  \end{pmatrix}$  is indicated by the dotted lines, which generically can  go below and above the starting point $\gamma_0=0$ of the entire curve. (The dotted lines are drawn not up to scale.) The initial segment that depends on the values of  $\tau',\xi'$ is marked by a thick solid line.
    \label{TabLeft}}
\end{table}

Thus if $m<0$ or $\xi'=1$, formula \eqref{Qa} is equivalent to
\begin{equation}
  \label{Q12prove}
  \wt w_{\vv \sigma\topp 0}(z)-q \wt w_{\vv\sigma\topp 1}(z)=\wt w_{\sigma}(z),
\end{equation}
where
\begin{equation}
  \label{def-sigma}
  \vv \sigma\topp {\tau'}=\sigma\left(\gamma\begin{pmatrix}
  \tau' & \vv \tau \\ \xi' & \vv \xi
\end{pmatrix} \right),\quad \tau'=0,1.
\end{equation}
Of course compositions $\vv\sigma\topp 0$, $\vv \sigma\topp 1$ depend also  on $\xi'\in\{0,1\}$ which is fixed in \eqref{Q12prove}.

We note that in the exceptional case $m=0$ and $\xi'=0$, where \eqref{g1L} has a different form, formula \eqref{Qa} is equivalent to
\begin{equation} %NOTE: this label is needed  for expanded version on arxiv!
  \label{Qexcept}
    \wt w_{\vv \sigma\topp 0}(z)-q  z \wt w_{\vv\sigma\topp 1}(z)=\wt w_{\sigma}(z)
\end{equation}
instead of \eqref{Q12prove}.

We now prove \eqref{Q12prove} in the generic case $m<0$ and $M>0$.
By going over the cases $\xi'=0,1$, it is straightforward to check that in the generic case,
the number of parts in the compositions $\vv \sigma$, $\vv \sigma\topp 0$ and $\vv \sigma\topp 1$ is the same, and the only part of the composition that changes is the one that counts the number of crossings through $0$.
Recall that $\sigma_0(\vv \gamma)$ is the number of times $m=\min(\vv \gamma)$ is attained,
so the part of $\vv \sigma$ that counts the number of times that the initial level $0$ is attained has index $J_0=-m$.
 For the other two compositions, the index of the part that counts crossings of 0  is $J'= -m+\xi '-1
 =J_0+\xi'-1$ for $\vv\sigma\topp 1$ and $J'+1=-m+\xi'$ for $\vv\sigma\topp 0$, see Table \ref{TabLeft}.
 We get %NOTE: this label is needed  for expanded version on arxiv!
\begin{equation}
  \label{Left-J'-generic}
  \sigma\topp 0_j=\begin{cases}
1+\sigma_j & j = J'+1, \\
  \sigma_j & \mbox { otherwise},
\end{cases}
\quad \mbox{ and } \quad \sigma\topp 1_j=\begin{cases}
1+\sigma_j & j = J', \\
  \sigma_j & \mbox {otherwise},
\end{cases}
\end{equation}
where $j=0,\dots,r$.
\arxiv{
If $\xi'=0$ then $\sigma\topp 0_{-m}=\sigma_{-m}+1$. If $\xi'=1$ and $m<0$ then $\sigma\topp 0_{0}=\sigma_0$ counts the number of crossings of the level $m-1$ by the new path, so the number of crossings of level $0$ increases by 1 and $\sigma\topp 0_{-m+1}=\sigma_{-m+1}+1$. The indexes of parts that increase are thus $J'+1=-m+\xi'$ as claimed. (This however requires $M>0$ to ensure $\sigma_{-m+1}>0$ and requires $m<0$ in order for the minimum not to be in the first step; otherwise $\sigma\topp 0_0=1$ and the number of parts of the composition increases by 1.)

Similar reasoning applies to $\vv \sigma\topp 1$: If $\xi'=0$ then
the new minimum is $m+1$, provided that $m<0$ and $\sigma\topp 1_{-m-1}=\sigma_{-m-1}+1$. Thus the increase in composition is at part with index $J'=-m+\xi'-1$. (This is where the assumption $m<0$ is used: if $m=0$ and $\xi'=0$ then the new curve has a minimum $0$ uniquely obtained at the origin $\sigma\topp 1_0=1$, and the number of parts increases.)

For $\xi'=1$,  the minimum is unchanged and $\sigma\topp 1_{-m}=\sigma_{-m}+1$, so the index of the part that increases is $J'=-m+\xi'-1$ as claimed.
}
Therefore, with
\begin{eqnarray}
\calX&:=&(\Dq z)^{\sigma_0-1}\Dq(\Dq z)^{\sigma_1-1}\Dq\dots (\Dq z)^{ \sigma_{J'-1}-1}\Dq \label{calX}
\\ \label{calY}
\calY&:=& (\Dq z)^{\sigma_{J'+2}-1}\Dq\dots (\Dq z)^{\sigma_r-1}\Dq\\
 \label{calU0} \calU&:=&(\Dq z)^{ \sigma_{J'}-1}\Dq(\Dq z)^{ \sigma_{J'+1}}\Dq \\
 \label{calU1}  \calV &:=&(\Dq z)^{ \sigma_{J'}}\Dq(\Dq z)^{ \sigma_{J'+1}-1}\Dq,
\end{eqnarray}
from \eqref{w2Dq'} we get
$$\wt w_{\vv \sigma\topp 0}(x)=\calX \calU\calY \left[\tfrac{1}{1-z}\right], \quad \wt w_{\vv\sigma\topp 1}(x)=\calX \calV \calY \left[\tfrac{1}{1-z}\right].$$
Since
$$\calU = (\Dq z)^{\sigma_J'-1}\Dq\Dq z (\Dq z)^{\sigma_{J'+1}-1}\Dq$$
and
$$
\calV= (\Dq z)^{\sigma_J'-1}\Dq z \Dq (\Dq z)^{\sigma_{J'+1}-1}\Dq,
$$
factoring out the common factors we get
\begin{equation}
  \label{U-comm}
  \calU-q\,\calV= (\Dq z)^{ \sigma_{J'}-1}\Dq (\Dq z-q\, z \Dq)
(\Dq z)^{ \sigma_{J'+1}-1}\Dq=(\Dq z)^{ \sigma_{J'}-1}\Dq
(\Dq z)^{ \sigma_{J'+1}-1}\Dq
\end{equation}
by \eqref{D-q-comm}.
This ends the proof of \eqref{Q12prove} when $m<0$ and $M>0$.

Most special cases with $m=0$ or $M=0$ require minor changes to the above argument and are omitted. We present one such argument for the case $m=0=M$ and $\xi'=1$.
\subsubsection{Case $m=M=0$ and $\xi'=1$}\label{Sec:M=m=0}
In this case, composition $\vv \sigma=(L+1)$ has only one part  ($r=M-m=0$), $\vv \sigma\topp 0=(L+1,1)$ has two parts, and
$\vv \sigma\topp 1=(L+2)$ has one part. Since
$$\Dq[\tfrac{1}{1-z}]=\Dq\left[1+\tfrac{z}{1-z}\right]=(\Dq z) [\tfrac{1}{1-z}] ,$$ representation \eqref{w2Dq'} gives
$$
\wt w_{\vv \sigma\topp 0}(z)=(\Dq z)^{L}\Dq \Dq [\tfrac{1}{1-z}] = \left((\Dq z)^{L}\Dq\right) (\Dq z) [\tfrac{1}{1-z}]
, $$
$$
\wt w_{\vv\sigma\topp 1}(z)=(\Dq z)^{L+1}\Dq [\tfrac{1}{1-z}] = \left((\Dq z)^{L}\Dq\right) (z \Dq) [\tfrac{1}{1-z}].
$$
So  \eqref{Q12prove} again follows from \eqref{D-q-comm}.

\arxiv{
Here are the omitted details for the remaining special cases. %

We first note that if $m=0$ and $\xi'=0$ then to prove \eqref{Qa} we need to show that
\eqref{Qexcept} is true.
In all other cases, we need to prove \eqref{Q12prove}.
\subsubsection{Case $m=0$, $M>0$}\label{Sec:m=0;M>0}
If $\xi'=0$ then from Table \ref{TabLeft} we see that
\begin{enumerate}
  [(a)]
\item  composition  $\vv\sigma\topp 0$ of $L+2$ has $r+1$ parts with $\sigma_0\topp 0=1+\sigma_0$ and
$\sigma_j\topp 0=\sigma_j$ for $j=1,\dots,r$.
\item composition $\vv \sigma\topp 1$ of $L+2$  has $r+2$ parts with $\sigma\topp 1_0=1$ and
$\sigma_j\topp 1=\sigma_{j-1}$ for $j=1,\dots,r+1$.
\end{enumerate}
From \eqref{w2Dq'} we see that
$$
\wt w_{\vv \sigma\topp 0}(z) =(\Dq z) w_{\vv \sigma}(z) \mbox{ and }
\wt w_{\vv\sigma\topp 1}(z) = \Dq w_{\vv \sigma}(z)
$$
 and \eqref{Qexcept} follows by \eqref{D-q-comm}.

 Note that this argument applies also to the case $M=0$,
 so taking into account Section \ref{Sec:M=m=0},  the proof for the case $M=m=0$ is now complete.

 On the other hand, if $\xi'=1$ and $M>0$ then $r>0$, $\vv\sigma \topp 0$ and $\vv\sigma\topp 1$ are both compositions of $L+2$ with $r+1$ parts. We have
 $\sigma\topp 0_0=\sigma_0$, $\sigma\topp 0_1=1+\sigma_1$, $\sigma\topp 0_j=\sigma_j$ for $j=2,\dots,r$ while $\sigma\topp 1_0=1+\sigma_0$ and $\sigma\topp 1_j=\sigma_j$ for $j=1,\dots r$. This  matches the formulas \eqref{Left-J'-generic} for the generic case with $J'=0$, so \eqref{Q12prove} follows.
\subsubsection{Case $m<0$, $M=0$}\label{Sec:m<0M-0}
This is another case where the number of parts in the compositions may change. Note that $\vv \sigma$ has $r+1$ parts with $r=-m$.
If $\xi'=0$ then, see Table \ref{TabLeft}, then $\sigma_{r}\topp 0=1+\sigma_{r}$ and $\sigma\topp 0_j=\sigma_j$ for $j=0,\dots,r-1=r-1$. On the other hand,
$\sigma_{r-1}\topp 1=\sigma_{r-1}$ while $\sigma_j\topp 1=\sigma_j$ for the other $j\in\{0,\dots,r\}$. Thus, the three compositions have $r+1$ parts and the formulas \eqref{Left-J'-generic} for the generic case with $J'=r-1=-m-1$ hold, so \eqref{Q12prove} follows.

If $\xi'=1$ then the number of parts in $\vv \sigma\topp 0$ increases to $r+2$. We have
$\sigma\topp 0_{r+1}=1$, and $\sigma\topp 0_j=\sigma_j$ for $j=0,\dots,r$.
Clearly, $\sigma\topp 1_j=\sigma_j$ for $j=0,\dots,r-1$ and $\sigma\topp 1_{r}=1+\sigma_r$.
Writing $\wt w_{\vv \sigma}(z)=w_{\sigma}(z)/(z;q)_{L+2}$ in the operator form as $\calY \Dq[(1-z)^{-1}]$ as in \eqref{w2Dq'} we see that
$$
\wt w_{\vv \sigma\topp 0}(z)=\calY \Dq \Dq [\tfrac{1}{1-z}]
=\calY \Dq \Dq  \left[1+\tfrac{z}{1-z}\right]
=\calY \Dq (\Dq z) [\tfrac{1}{1-z}]
$$
and
$$
\wt w_{\vv \sigma\topp 0}(z)=\calY (\Dq z)\Dq [\tfrac{1}{1-z}] =
\calY \Dq (z\Dq) [\tfrac{1}{1-z}].
$$
 Thus \eqref{Q12prove} follows by \eqref{D-q-comm} again.
}
\subsubsection{Right boundary}
Next, we prove \eqref{Qb}. With $\vv \gamma=\gamma\begin{pmatrix}
     \vv \tau
    \\   \vv \xi
  \end{pmatrix}$ of length $L$, let $m=\min \vv \gamma$ and $M=\max \vv \gamma$. The {\em generic} case to consider first is $m<\gamma_L$ and $M>\gamma_L$.

Inspecting the cases listed in Table \ref{TabRight}, we see that
\begin{equation*}
  \label{g-right1}
  g_{\A,\B} \begin{pmatrix}
     \vv \tau &1
    \\   \vv \xi & \xi'
  \end{pmatrix} =  \B^{1-\xi '} g_{\A,\B}\begin{pmatrix}
     \vv \tau
    \\   \vv \xi
  \end{pmatrix}
\end{equation*}
and
\begin{equation}
  \label{g-right0}
  g_{\A,\B}\begin{pmatrix}
     \vv \tau & 0
    \\   \vv \xi & \xi'
  \end{pmatrix} =  \begin{cases}
    \A\, g_{\A,\B}\begin{pmatrix}
     \vv \tau
    \\   \vv \xi
  \end{pmatrix} & \mbox{ if } m = \gamma_L \mbox{ and  } \xi'=1, \\
   \B^{-\xi '} g_{\A,\B}\begin{pmatrix}
     \vv \tau
    \\   \vv \xi
  \end{pmatrix} & \mbox{otherwise}.
  \end{cases}
\end{equation}
(Recall that $\B>0$ throughout this proof.)
\begin{table}[htb]
  \begin{tabular}{||c||c|c||} \hline\hline
  $\xi'$ & $\vv \gamma\begin{pmatrix}
     \vv \tau &1
    \\   \vv \xi & \xi'
  \end{pmatrix}$ & $\vv \gamma\begin{pmatrix}
     \vv \tau & 0
    \\   \vv \xi & \xi'
  \end{pmatrix}$ \\
  \hline\hline
  $0$ &  \begin{tikzpicture}[scale=1]
     \draw [-,gray,dotted,thick] (-1,.8) to (-.75,-.5);
   \draw [-,gray,dotted,thick] (-.75,-.5) to (-.5,.25);
    \draw [-,gray,dotted,thick] (-.5,.25) to (-.25,.0);
     \draw [-,gray,dotted,thick] (-.25,.0) to (0,0);
       \draw [-,blue,thick] (0,0) to (0.5,.5);
                    \draw [fill=black] (0.5,0.5) circle [radius=0.03];
             \draw [fill=black] (0,0) circle [radius=0.03];
                 \draw [fill=white] (1.25,0.85) circle [radius=0.0];
  \end{tikzpicture}
  &

  \begin{tikzpicture}[scale=1]
       \draw [-,gray,dotted,thick] (-1,.8) to (-.75,-.5);
   \draw [-,gray,dotted,thick] (-.75,-.5) to (-.5,.25);
    \draw [-,gray,dotted,thick] (-.5,.25) to (-.25,.0);
     \draw [-,gray,dotted,thick] (-.25,.0) to (0,0);
       \draw [-,blue,thick] (0,0) to (0.5,0);
                    \draw [fill=black] (0.5,0) circle [radius=0.03];
             \draw [fill=black] (0,0) circle [radius=0.03];

                 \draw [fill=white] (1.25,0.85) circle [radius=0.0];
  \end{tikzpicture}
  \\
  \hline
  $1$ &
   \begin{tikzpicture}[scale=1]
        \draw [-,gray,dotted,thick] (-1,.8) to (-.75,-.5);
   \draw [-,gray,dotted,thick] (-.75,-.5) to (-.5,.25);
    \draw [-,gray,dotted,thick] (-.5,.25) to (-.25,.0);
     \draw [-,gray,dotted,thick] (-.25,.0) to (0,0);
       \draw [-,blue,thick] (0,0) to (0.5,0);

             \draw [fill=black] (0.5,0) circle [radius=0.03];
        \draw [fill=black] (0.0,0) circle [radius=0.03];

  \end{tikzpicture}
  &

  \begin{tikzpicture}[scale=1]
       \draw [-,gray,dotted,thick] (-1,.8) to (-.75,-.5);
   \draw [-,gray,dotted,thick] (-.75,-.5) to (-.5,.25);
    \draw [-,gray,dotted,thick] (-.5,.25) to (-.25,.0);
     \draw [-,gray,dotted,thick] (-.25,.0) to (0,0);
       \draw [-,blue,thick] (0,0) to (0.5,-.5);
        \draw [fill=black] (0.5,-.5) circle [radius=0.03];
             \draw [fill=black] (0,0) circle [radius=0.03];
                 \draw [fill=white] (1.25,0.45) circle [radius=0.0];
  \end{tikzpicture}
   \\  \hline\hline
  \end{tabular}
  \caption{ The fixed part of the curve $\vv \gamma\begin{pmatrix}
       \vv \tau& \tau'
    \\ \vv \xi & \xi'
  \end{pmatrix}$  is indicated by the dotted lines,
  which generically can  go below and above the end point $\gamma_L$ of the curve. (Doted lines are drawn not to scale.) The last segment that depends on the values of  $\tau',\xi'$ is marked by the thick solid line.\label{TabRight}}
\end{table}

\arxiv{Indeed, if $\tau'=1$ or in the generic case,
$$\min \gamma \begin{pmatrix}
     \vv \tau &1
    \\   \vv \xi & \xi'
  \end{pmatrix}= \min \gamma\begin{pmatrix}
     \vv \tau
    \\   \vv \xi
  \end{pmatrix},$$
  while the end-point is at $\gamma_L+\tau'-\xi'$.
  }
Thus  if $m<\gamma_L$ and $M>\gamma_L$ then \eqref{Qb} reduces to the identity
\begin{equation}
  \label{Qb2prove}
  \wt w_{\vv\sigma\topp 1}(z)-q \wt w_{\vv \sigma\topp 0}(z)=\wt w_{\sigma}(z)
\end{equation}
  with
$\vv \sigma\topp {\tau'}=\sigma\left(\gamma\begin{pmatrix}
   \vv \tau &\tau'  \\ \vv \xi &\xi'
\end{pmatrix} \right)$, where $\xi'\in\{0,1\}$ is fixed.
(This is   a right boundary  analog  of \eqref{Q12prove} and \eqref{def-sigma}.)

We note that in the exceptional case $m=\gamma_L$ and $\xi'=1$ where \eqref{g-right0} has a different form,  to deduce equation \eqref{Qb} we need %NOTE: this label is needed  for expanded version on arxiv!
\begin{equation}
  \label{w-exc-right}
    \wt w_{\vv\sigma\topp 1}(z)-q\, z \wt w_{\vv \sigma\topp 0}(z)=\wt w_{\sigma}(z)
\end{equation}
instead of \eqref{Qb2prove}. (The details of this case are omitted.)

Recall that $\vv \sigma$ is a composition of $L+1$ into $r+1$ parts that counts the contributions \eqref{def:sigma} of the values of $\vv \gamma=\gamma\begin{pmatrix}
  \vv \tau\\\vv \xi
\end{pmatrix}$, so that $r=\max\vv\gamma-\min\vv\gamma$. Let $J_0$ be the index of the part of $\vv\sigma$ that counts the contribution of the end-point $\gamma_L$ of the path. In the generic case $M>\gamma_L$ and $m<\gamma_L$, so we have $0<J_0<r+1$.

The compositions $\vv \sigma\topp 1$ and $\vv \sigma\topp 0$ differ from $\vv \sigma$ only in the part that counts the contribution of the end point $\gamma_L+\tau'-\xi'$ of the path. Since the contribution of the minimum has index 0, see \eqref{def:sigma}, the index of the part that changes is
$J_0+\tau'-\xi'$.
Inspecting Table \ref{TabRight}, we see that in the generic case both $\vv \sigma\topp1$ and $\vv \sigma\topp 0$ are compositions of $L+2$ into $r+1$ parts, with exactly one part that increases:
%NOTE: both labels are used in expanded version on arxiv!
\begin{equation} \label{gen2a}
  \sigma\topp 1_j= \begin{cases}
    1+\sigma_j & j=J'+1,  \\
    \sigma_j & \mbox{otherwise},
  \end{cases}
\end{equation}
\begin{equation} \label{gen2b}
  \sigma\topp 0_j= \begin{cases}
    1+\sigma_j & j=J',  \\
    \sigma_j & \mbox{otherwise,}
  \end{cases}
\end{equation}
where $J'=J_0-\xi'$.

Comparing this with the compositions that appeared in the proof of \eqref{Q12prove}, we see that
$\wt w_{\vv\sigma\topp 1}=\calX\calU\calY[1/(1-z)]$,  $\wt w_{\vv \sigma\topp 0}=\calX\calV\calY[1/(1-z)]$ with the same operators (\ref{calX}-\ref{calU1}) that we used before, so \eqref{Qb2prove} follows by the previous argument, see \eqref{U-comm}.

The remaining cases $m=\gamma_L$ or $M=\gamma_L$ require some changes and are omitted.
\arxiv{
Here are the omitted details. First, we note that in the exceptional case $m=\gamma_L$ and $\xi'=1$ expression \eqref{Qb} reduces to  \eqref{w-exc-right}
 instead of \eqref{Qb2prove}. In all other cases, we need to prove \eqref{Qb2prove}. In the proof we rely on the graphs  in Table \ref{TabRight}.
 \subsubsection{Case $m=\gamma_L$, $M>m$}\label{Sec226}
 If $\xi'=1$ then $\sigma\topp 1_0=1+\sigma_0$ and $\sigma\topp 1_j=\sigma_j$ for $j=1,\dots,r$.
 On the other hand, $\sigma\topp 0_0=1$ and $\sigma\topp 0_{j}=\sigma_{j-1}$ for $j=1, \dots r+1$.
 Thus writing $\wt w_{\vv \sigma}(z)=(\Dq z)^{\sigma_0-1}\Dq \calY [\tfrac1{1-z}]$, we have
 $$
 \wt w_{\vv\sigma\topp 1}(z)-q z  \wt w_{\sigma_0}(z)=
 (\Dq z)^{\sigma_0}\Dq \calY [\tfrac1{1-z}] - q  z \Dq (\Dq z)^{\sigma_0-1}\Dq \calY [\tfrac1{1-z}]
 =(\Dq z - z  \Dq)\wt w_{\vv \sigma}(z).
 $$
   By \eqref{D-q-comm}, this proves \eqref{w-exc-right}, which is what is needed in the exceptional case of \eqref{g-right0}.

 If $\xi'=0$ then $\sigma_1\topp 1=1+\sigma_1$ and $\sigma\topp 1_j=\sigma_j$ for the other $j\in\{0,\dots,r\}$.  On the other hand, $\sigma\topp 0_0=1+\sigma_0$ and $\sigma\topp 0_j=\sigma_j$ for $j=1,\dots,r$. This matches the "generic case" \eqref{gen2a}, \eqref{gen2b} with $J'=0$, so \eqref{Qb2prove} follows.
}
\arxiv{
  \subsubsection{Case $M=\gamma_L$, $M>m$}\label{Sec227}
  In this case we have again compositions into different number of parts. Note that $\gamma_L$ contributes to the last part $\sigma_r$ of composition $\vv \sigma$.
  \begin{enumerate}[(a)]
    \item If $\xi'=0$ then $\vv\sigma\topp 1$ has $r+2$ parts with $\sigma\topp 1_j=\sigma_j$ for $j=0,\dots,r$ and $\sigma\topp 1_{r+1}=1$. On the other hand $\sigma\topp 0_j=\sigma_j$ for $j=0,\dots,r-1$ and $\sigma\topp 0_r=1+\sigma_r$.
        Thus, as in Section \ref{Sec:m<0M-0} we get
        \begin{equation}
            \label{227a}
             \wt w_{\vv\sigma\topp 1}(z)=\calY \Dq \Dq [\tfrac{1}{1-z}] =
        \calY \Dq (\Dq z) [\tfrac{1}{1-z}], \mbox{ and }
         \wt w_{\vv\sigma\topp 1}(z)= \calY \Dq (z \Dq)[\tfrac{1}{1-z}]
        \end{equation}

        and \eqref{Qb2prove} follows from \eqref{D-q-comm}.
  \item If $\xi'=1$ then both $\vv \sigma\topp 1$ and $\vv \sigma\topp 0$  have $r+1$ parts. We have $\sigma\topp 1_j=\sigma_j$ for $j=0,\dots,r-1$ and $\sigma\topp 1_{r}=1+\sigma_r$. For the second composition, we get $\sigma\topp 0_{r-1}=1+\sigma_r-1$ and $\sigma\topp 0_j=\sigma_j$ for all other $j\in\{0,\dots r\}$. This coincides again with the "generic case" \eqref{gen2a}, \eqref{gen2b} with $J'=r-1$, so \eqref{Qb2prove} follows.
  \end{enumerate}
    \subsubsection{Case $M=m=\gamma_L$}
    This is the case $M=m=0$ with  $\vv \sigma=(L+1)$. This case is handled  by listing all the compositions explicitly, as in Section \ref{Sec:M=m=0}.

    If $\xi'=0$ then $\vv \sigma\topp 1 =(L+1,1)$ and $\sigma\topp 0 =(L+2)$. So, the representation \eqref{227a} holds with  $\calY= (\Dq z)^{L}$, and \eqref{Qb2prove} follows as in Section \ref{Sec227}.

    If $\xi'=1$, we are in the exceptional case where we need to verify \eqref{w-exc-right}. We have  $\vv \sigma\topp 1 =(L+2)$ and  $\vv \sigma\topp 0 =(1,L+1)$. Thus as in Section \ref{Sec226} with $\sigma_0=L+1$, we have
    $$\wt w_{\vv\sigma\topp 1}(z)-q\, z \wt w_{\vv \sigma\topp 0}(z)=\left((\Dq z)^{L+1}\Dq - q z \Dq(\Dq z)^{L}\Dq\right)[\tfrac1{1-z}]
    = \left(\Dq z - q z \Dq\right)(\Dq z)^{L}\Dq[\tfrac1{1-z}].$$
    So \eqref{w-exc-right} follows from \eqref{D-q-comm}.
}

\begin{table}[htb]
  \begin{tabular}{||c||c|c|c||}\hline\hline
  $(\xi', \xi'')$ & $\vv \gamma\begin{pmatrix}
    \vv \tau_1 & 1 & 0 &\vv \tau_2
    \\ \vv \xi_1 & \xi' & \xi'' & \vv \xi_2
  \end{pmatrix}$ & $\vv \gamma\begin{pmatrix}[.5]
    \vv \tau_1 & 0 & 1 &\vv \tau_2
    \\ \vv \xi_1 & \xi' & \xi'' & \vv \xi_2
  \end{pmatrix}$ & $\vv \gamma\begin{pmatrix}[.5]
    \vv \tau_1 & 1-\xi''  &\vv \tau_2
    \\ \vv \xi_1 & \xi' &  \vv \xi_2
  \end{pmatrix}$ \\
  \hline\hline
  $(0,0)$ &  \begin{tikzpicture}[scale=1]
       \draw [-,gray,dotted,thick] (-1,.8) to (-.75,-.5);
   \draw [-,gray,dotted,thick] (-.75,-.5) to (-.5,.25);
    \draw [-,gray,dotted,thick] (-.5,.25) to (-.25,.0);
     \draw [-,gray,dotted,thick] (-.25,.0) to (0,0);
       \draw [-,blue,thick] (0,0) to (0.5,.5);
        \draw [-,blue,thick] (0.5,0.5) to (1,.5);
                    \draw [fill=black] (0.5,0.5) circle [radius=0.03];
        \draw [fill=black] (1.0,.5) circle [radius=0.03];
             \draw [fill=black] (0,0) circle [radius=0.03];
         \draw [-,gray,dotted,thick] (1,.5) to (1.25,.75);
        \draw [-,gray,dotted,thick] (1.25,.75) to (1.5,.5);

                \draw [-,gray,dotted,thick] (1.5,.5) to (1.75,-.5);
              \draw [-,gray,dotted,thick] (1.75,-.5) to (1.9,1.1);
\draw [-,gray,dotted,thick] (1.9,1.1) to (2,.6);
                 \draw [fill=white] (1.25,0.85) circle [radius=0.0];
  \end{tikzpicture}
  &

  \begin{tikzpicture}[scale=1]
       \draw [-,gray,dotted,thick] (-1,.8) to (-.75,-.5);
   \draw [-,gray,dotted,thick] (-.75,-.5) to (-.5,.25);
    \draw [-,gray,dotted,thick] (-.5,.25) to (-.25,.0);
     \draw [-,gray,dotted,thick] (-.25,.0) to (0,0);
       \draw [-,blue,thick] (0,0) to (0.5,0);
        \draw [-,blue,thick] (0.5,0) to (1,.5);
                    \draw [fill=black] (0.5,0) circle [radius=0.03];
        \draw [fill=black] (1.0,.5) circle [radius=0.03];
             \draw [fill=black] (0,0) circle [radius=0.03];
         \draw [-,gray,dotted,thick] (1,.5) to (1.25,.75);
        \draw [-,gray,dotted,thick] (1.25,.75) to (1.5,.5);
                         \draw [-,gray,dotted,thick] (1.5,.5) to (1.75,-.5);
\draw [-,gray,dotted,thick] (1.75,-.5) to (1.9,1.1);
\draw [-,gray,dotted,thick] (1.9,1.1) to (2,.6);
                 \draw [fill=white] (1.25,0.85) circle [radius=0.0];
  \end{tikzpicture}
  &
    \begin{tikzpicture}[scale=1]
         \draw [-,gray,dotted,thick] (-.5,.8) to (-.25,-.5);
   \draw [-,gray,dotted,thick] (-.25,-.5) to (0,.25);
    \draw [-,gray,dotted,thick] (0,.25) to (.25,.0);
     \draw [-,gray,dotted,thick] (.25,.0) to (.5,0);
        \draw [-,blue,thick] (0.5,0.0) to (1,.5);
         \draw [-,gray,dotted,thick] (1,.5) to (1.25,.75);
        \draw [-,gray,dotted,thick] (1.25,.75) to (1.5,.5);
                            \draw [fill=black] (0.5,0) circle [radius=0.03];
        \draw [fill=black] (1.0,.5) circle [radius=0.03];
                         \draw [-,gray,dotted,thick] (1.5,.5) to (1.75,-.5);
\draw [-,gray,dotted,thick] (1.75,-.5) to (1.9,1.1);
\draw [-,gray,dotted,thick] (1.9,1.1) to (2,.6);
                 \draw [fill=white] (1.25,0.85) circle [radius=0.0];
  \end{tikzpicture}
  \\
  \hline
  $(1,0)$ &
   \begin{tikzpicture}[scale=1]
        \draw [-,gray,dotted,thick] (-1,.8) to (-.75,-.5);
   \draw [-,gray,dotted,thick] (-.75,-.5) to (-.5,.25);
    \draw [-,gray,dotted,thick] (-.5,.25) to (-.25,.0);
     \draw [-,gray,dotted,thick] (-.25,.0) to (0,0);
       \draw [-,blue,thick] (0,0) to (0.5,0);

        \draw [-,blue,thick] (0.5,0) to (1,0);
             \draw [fill=black] (0.5,0) circle [radius=0.03];
        \draw [fill=black] (0.0,0) circle [radius=0.03];
             \draw [fill=black] (1,0) circle [radius=0.03];
        \draw [fill=black] (0.0,0) circle [radius=0.03];
         \draw [-,gray,dotted,thick] (1,0) to (1.25,.25);
        \draw [-,gray,dotted,thick] (1.25,.25) to (1.5,0);
                 \draw [-,gray,dotted,thick] (1.5,0) to (1.75,-1);
\draw [-,gray,dotted,thick] (1.75,-1) to (1.9,.6);
\draw [-,gray,dotted,thick] (1.9,.6) to (2,.1);
  \end{tikzpicture}
  &

  \begin{tikzpicture}[scale=1]
       \draw [-,gray,dotted,thick] (-1,.8) to (-.75,-.5);
   \draw [-,gray,dotted,thick] (-.75,-.5) to (-.5,.25);
    \draw [-,gray,dotted,thick] (-.5,.25) to (-.25,.0);
     \draw [-,gray,dotted,thick] (-.25,.0) to (0,0);
       \draw [-,blue,thick] (0,0) to (0.5,-.5);
        \draw [-,blue,thick] (0.5,-.5) to (1,0);
         \draw [-,gray,dotted,thick] (1,0) to (1.25,.25);
        \draw [-,gray,dotted,thick] (1.25,.25) to (1.5,0);

        \draw [fill=black] (0.5,-.5) circle [radius=0.03];
        \draw [fill=black] (1.0,0) circle [radius=0.03];
             \draw [fill=black] (0,0) circle [radius=0.03];

                 \draw [-,gray,dotted,thick] (1.5,0) to (1.75,-1);
\draw [-,gray,dotted,thick] (1.75,-1) to (1.9,.6);
\draw [-,gray,dotted,thick] (1.9,.6) to (2,.1);
                 \draw [fill=white] (1.25,0.45) circle [radius=0.0];
  \end{tikzpicture}
  &
    \begin{tikzpicture}[scale=1]
         \draw [-,gray,dotted,thick] (-.5,.8) to (-.25,-.5);
   \draw [-,gray,dotted,thick] (-.25,-.5) to (0,.25);
    \draw [-,gray,dotted,thick] (0,.25) to (.25,.0);
     \draw [-,gray,dotted,thick] (.25,.0) to (.5,0);
        \draw [-,blue,thick] (0.5,0.0) to (1,.0);
          \draw [fill=black] (0.5,0) circle [radius=0.03];
            \draw [fill=black] (1,0) circle [radius=0.03];
         \draw [-,gray,dotted,thick] (1,.0) to (1.25,.25);
        \draw [-,gray,dotted,thick] (1.25,.25) to (1.5,0);
                       \draw [-,gray,dotted,thick] (1.5,0) to (1.75,-1);
\draw [-,gray,dotted,thick] (1.75,-1) to (1.9,.6);
\draw [-,gray,dotted,thick] (1.9,.6) to (2,.1);
  \end{tikzpicture}
  \\ \hline
  $(0,1)$ &
    \begin{tikzpicture}[scale=1]
         \draw [-,gray,dotted,thick] (-1,.8) to (-.75,-.5);
   \draw [-,gray,dotted,thick] (-.75,-.5) to (-.5,.25);
    \draw [-,gray,dotted,thick] (-.5,.25) to (-.25,.0);
     \draw [-,gray,dotted,thick] (-.25,.0) to (0,0);
       \draw [-,blue,thick] (0,0) to (0.5,.5);
        \draw [-,blue,thick] (0.5,.5) to (1,0);
         \draw [-,gray,dotted,thick] (1,0) to (1.25,.25);
        \draw [-,gray,dotted,thick] (1.25,.25) to (1.5,0);
                         \draw [-,gray,dotted,thick] (1.5,0) to (1.75,-1);
\draw [-,gray,dotted,thick] (1.75,-1) to (1.9,.6);
\draw [-,gray,dotted,thick] (1.9,.6) to (2,.1);
         \draw [fill=black] (0.5,0.5) circle [radius=0.03];
         \draw [fill=white] (0.5,0.8) circle [radius=0.0];
        \draw [fill=black] (1.0,0) circle [radius=0.03];
             \draw [fill=black] (0,0) circle [radius=0.03];
  \end{tikzpicture}
  &
   \begin{tikzpicture}[scale=1]
        \draw [-,gray,dotted,thick] (-1,.8) to (-.75,-.5);
   \draw [-,gray,dotted,thick] (-.75,-.5) to (-.5,.25);
    \draw [-,gray,dotted,thick] (-.5,.25) to (-.25,.0);
     \draw [-,gray,dotted,thick] (-.25,.0) to (0,0);
       \draw [-,blue,thick] (0,0) to (0.5,0);
       \draw [fill=black] (0.5,0) circle [radius=0.03];
        \draw [-,blue,thick] (0.5,0) to (1,0);
         \draw [-,gray,dotted,thick] (1,0) to (1.25,.25);
        \draw [-,gray,dotted,thick] (1.25,.25) to (1.5,0);
                 \draw [-,gray,dotted,thick] (1.5,0) to (1.75,-1);
\draw [-,gray,dotted,thick] (1.75,-1) to (1.9,.6);
\draw [-,gray,dotted,thick] (1.9,.6) to (2,.1);
                \draw [fill=black] (0.5,0.0) circle [radius=0.03];
        \draw [fill=black] (1.0,0) circle [radius=0.03];
             \draw [fill=black] (0,0) circle [radius=0.03];
  \end{tikzpicture}
  &
   \begin{tikzpicture}[scale=1]
           \draw [-,gray,dotted,thick] (-.5,.8) to (-.25,-.5);
   \draw [-,gray,dotted,thick] (-.25,-.5) to (0,.25);
    \draw [-,gray,dotted,thick] (0,.25) to (.25,.0);
     \draw [-,gray,dotted,thick] (.25,.0) to (.5,0);
        \draw [-,blue,thick] (0.5,0.0) to (1,.0);
          \draw [fill=black] (0.5,0) circle [radius=0.03];
            \draw [fill=black] (1,0) circle [radius=0.03];
         \draw [-,gray,dotted,thick] (1,.0) to (1.25,.25);
        \draw [-,gray,dotted,thick] (1.25,.25) to (1.5,0);
                 \draw [-,gray,dotted,thick] (1.5,0) to (1.75,-1);
\draw [-,gray,dotted,thick] (1.75,-1) to (1.9,.6);
\draw [-,gray,dotted,thick] (1.9,.6) to (2,.1);

  \end{tikzpicture}
  \\ \hline
  $(1,1)$ &
   \begin{tikzpicture}[scale=1]
          \draw [-,gray,dotted,thick] (-1,.8) to (-.75,-.5);
   \draw [-,gray,dotted,thick] (-.75,-.5) to (-.5,.25);
    \draw [-,gray,dotted,thick] (-.5,.25) to (-.25,.0);
     \draw [-,gray,dotted,thick] (-.25,.0) to (0,0);
       \draw [-,blue,thick] (0,0) to (0.5,.0);
        \draw [-,blue,thick] (0.5,0.0) to (1,-.5);
         \draw [-,gray,dotted,thick] (1,-.5) to (1.25,-.25);
        \draw [-,gray,dotted,thick] (1.25,-.25) to (1.5,-.5);
                 \draw [-,gray,dotted,thick] (1.5,-.5) to (1.75,-1);
\draw [-,gray,dotted,thick] (1.75,-1) to (1.9,.6);
\draw [-,gray,dotted,thick] (1.9,.6) to (2,.1);

                 \draw [fill=white] (1.25,0.85) circle [radius=0.0];
                \draw [fill=black] (0.5,0.0) circle [radius=0.03];
        \draw [fill=black] (1.0,-.5) circle [radius=0.03];
             \draw [fill=black] (0,0) circle [radius=0.03];
  \end{tikzpicture}
  &
  \begin{tikzpicture}[scale=1]
            \draw [-,gray,dotted,thick] (-1,.8) to (-.75,-.5);
   \draw [-,gray,dotted,thick] (-.75,-.5) to (-.5,.25);
    \draw [-,gray,dotted,thick] (-.5,.25) to (-.25,.0);
     \draw [-,gray,dotted,thick] (-.25,.0) to (0,0);
       \draw [-,blue,thick] (0,0) to (0.5,-.5);
        \draw [-,blue,thick] (0.5,-.5) to (1,-.5);
         \draw [-,gray,dotted,thick] (1,-.5) to (1.25,-.25);
        \draw [-,gray,dotted,thick] (1.25,-.25) to (1.5,-.5);
                 \draw [-,gray,dotted,thick] (1.5,-.5) to (1.75,-1);
\draw [-,gray,dotted,thick] (1.75,-1) to (1.9,.6);
\draw [-,gray,dotted,thick] (1.9,.6) to (2,.1);

                 \draw [fill=white] (1.25,0.85) circle [radius=0.0];
                \draw [fill=black] (0.5,-0.5) circle [radius=0.03];
        \draw [fill=black] (1.0,-.5) circle [radius=0.03];
             \draw [fill=black] (0,0) circle [radius=0.03];
  \end{tikzpicture}
  &
    \begin{tikzpicture}[scale=1]
            \draw [-,gray,dotted,thick] (-.5,.8) to (-.25,-.5);
   \draw [-,gray,dotted,thick] (-.25,-.5) to (0,.25);
    \draw [-,gray,dotted,thick] (0,.25) to (.25,.0);
     \draw [-,gray,dotted,thick] (.25,.0) to (.5,0);
        \draw [-,blue,thick] (0.5,0.0) to (1,-.5);
         \draw [-,gray,dotted,thick] (1,-.5) to (1.25,-.25);
        \draw [-,gray,dotted,thick] (1.25,-.25) to (1.5,-.5);
                 \draw [-,gray,dotted,thick] (1.5,-.5) to (1.75,-1);
\draw [-,gray,dotted,thick] (1.75,-1) to (1.9,.6);
\draw [-,gray,dotted,thick] (1.9,.6) to (2,.1);
                 \draw [fill=white] (1.25,0.85) circle [radius=0.0];
                   \draw [fill=black] (0.5,0) circle [radius=0.03];
                    \draw [fill=black] (1,-0.5) circle [radius=0.03];
  \end{tikzpicture} \\  \hline\hline
  \end{tabular}
  \caption{ The curve $\vv \gamma\begin{pmatrix}
    \vv \tau_1 & \tau' & \tau'' &\vv \tau_2
    \\ \vv \xi_1 & \xi' & \xi'' & \vv \xi_2
  \end{pmatrix}$ has  two fixed parts $\vv \gamma(\vv \tau_1,\vv \xi_1)$ and
  $\vv \gamma(\vv \tau_2,\vv \xi_2)$, indicated by the dotted lines, and  the middle part that depends on the values of  $\tau',\tau'',\xi',\xi''$, which is marked by the thick solid line. Note that the value of $\vv\gamma$ at the end-point  is not affected by the choice of $(\xi',\xi'')$. The value of $\min \vv\gamma$ is affected in only one case: when $(\tau',\tau'')=(0,1)$, $(\xi',\xi'')=(1,0)$ and $\min\vv\gamma \begin{pmatrix}[.2]
    \vv \tau_1 & \vv\tau_2 \\
    \vv \xi_1 &  \vv \xi_2
  \end{pmatrix}=0$ is attained the end of the left hand side curve $\gamma\begin{pmatrix}
    [.2]
    \vv \tau_1 \\ \vv \xi_1
  \end{pmatrix}$.\label{TabT1}}
\end{table}
\subsubsection{Bulk}
Finally, we prove \eqref{Q:bulk}. We have
\begin{equation}\label{bulk}
 g_{\A,\B}\begin{pmatrix}
  \vv \tau_1 & 1 & 0 &\vv \tau_2\\
  \vv\xi_1&\xi' &\xi'' & \vv \xi_2
\end{pmatrix}
= g_{\A,\B}\begin{pmatrix}
  \vv \tau_1 & 0 & 1 &\vv \tau_2\\
  \vv\xi_1&\xi' &\xi'' & \vv \xi_2
\end{pmatrix}=
 g_{\A,\B}\begin{pmatrix}
  \vv \tau_1 & 1-\xi'' & \vv \tau_2\\
  \vv\xi_1&\xi' & \vv \xi_2
\end{pmatrix}
\end{equation}
for all the choices of $\xi',\xi''$, except for one case, when  $(\xi',\xi'')=(1,0)$ and $\min\vv\gamma \begin{pmatrix}[.2]
    \vv \tau_1 & \vv\tau_2 \\
    \vv \xi_1 &  \vv \xi_2
  \end{pmatrix}$ is attained at the end of the curve $\gamma\begin{pmatrix}
    [.2]
    \vv \tau_1 \\ \vv \xi_1
  \end{pmatrix}$, see Table \ref{TabT1}. In the exceptional case the formulas change to \eqref{bulk-exception} below.

  In the generic case where \eqref{bulk} holds, to prove formula \eqref{Q:bulk} we need to show that
  \begin{equation}
    \label{buld2prove}
  \wt w_{\vv\sigma\topp {1,0}}(z)-q \wt w_{\vv\sigma\topp {0,1}}(z)=\wt w_{\sigma}(z),
\end{equation}
 where for fixed $\xi',\xi''\in\{0,1\}$ we write
 $$\vv \sigma=\sigma\left(\gamma\begin{pmatrix}
  \vv \tau_1 & 1-\xi'' & \vv \tau_2\\
  \vv\xi_1&\xi' & \vv \xi_2
\end{pmatrix}\right)$$
and for $\tau', \tau''\in\{0,1\}$ we write
$$\vv \sigma\topp {\tau',\tau''}=\sigma\left( \gamma \begin{pmatrix}
  \vv \tau_1 & \tau' & \tau'' &\vv \tau_2\\
  \vv\xi_1&\xi' &\xi'' & \vv \xi_2
\end{pmatrix}\right).$$
Let $L_1$ be the length of the path $\vv \gamma\topp 1=\gamma \begin{pmatrix}
  \vv \tau_1  \\
  \vv\xi_1
\end{pmatrix}$ and let $J$ be the index of the part of composition $\vv \sigma$ to which $\gamma_{L_1}\topp 1$ contributes. That is,
$$J=\gamma_{L_1}\topp 1-\min\vv \gamma\begin{pmatrix}
  \vv \tau_1 & 1-\xi'' & \vv \tau_2\\
  \vv\xi_1&\xi' & \vv \xi_2
\end{pmatrix}.$$ (This is the level above the minimum on which the solid paths in Table \ref{TabT1} begin.)
Write $J'=J-\xi'$.
By going over the cases in Table \ref{TabT1}, we check that
$$
\sigma_{J'+1}\topp{1,0}=1+\sigma_{J'+1}, \quad \sigma_{J'}\topp{0,1}=1+\sigma_{J'},
$$
while $\sigma_j\topp {1,0}=\sigma_j$ for   $j\ne J'+1$ and $
\sigma_{j}\topp{0,1}=\sigma_j$ for $j\ne J'$.

  Therefore, as in the previous part of the proof with $\calX,\calY$ given by \eqref{calX} and
  \eqref{calY}, and  $\calU,\calV$ given by  \eqref{calU0} and \eqref{calU1}, from \eqref{w2Dq'} we get
$$\wt w_{\vv\sigma\topp {1,0}}(z)=\calX \calU \calY [\tfrac{1}{1-z}], \quad
\wt w_{\vv\sigma\topp {0,1}}(z)=\calX \calV \calY [\tfrac{1}{1-z}].$$
 Since \eqref{U-comm} holds again,
this ends the proof of \eqref{buld2prove} in the "generic case".
It remains to consider the exceptional case
when  $(\xi',\xi'')=(1,0)$ and $\gamma_{L_1}=\min\vv\gamma \begin{pmatrix}[.2]
    \vv \tau_1 & \vv\tau_2 \\
    \vv \xi_1 &  \vv \xi_2
  \end{pmatrix}$, i.e., $J=0$.
   In this case, we replace \eqref{bulk} with
  \begin{equation}
    \label{bulk-exception}
    g_{\A,\B}\begin{pmatrix}
  \vv \tau_1 & 1 & 0 &\vv \tau_2\\
  \vv\xi_1&\xi' &\xi'' & \vv \xi_2
\end{pmatrix}
=\frac{1}{\A\B} \, g_{\A,\B}\begin{pmatrix}
  \vv \tau_1 & 0 & 1 &\vv \tau_2\\
  \vv\xi_1&\xi' &\xi'' & \vv \xi_2
\end{pmatrix}=
 g_{\A,\B}\begin{pmatrix}
  \vv \tau_1 & 1-\xi'' & \vv \tau_2\\
  \vv\xi_1&\xi' & \vv \xi_2
\end{pmatrix}.
  \end{equation}
  So to conclude the proof we need to verify that %NOTE: this label is needed  for expanded version on arxiv!
  \begin{equation}
     \label{bulk-except2prove}
      \wt w_{\vv\sigma\topp {1,0}}(z)-q z \wt w_{\vv\sigma\topp {0,1}}(z)=\wt w_{\sigma}(z).
  \end{equation}
  \arxiv{Indeed, if \eqref{bulk-except2prove} holds then
 \begin{multline*}
  \wt Q \begin{pmatrix}
  \vv \tau_1 & 1 & 0 &\vv \tau_2\\
  \vv\xi_1&\xi' &\xi'' & \vv \xi_2
\end{pmatrix} -q \wt Q\begin{pmatrix}
  \vv \tau_1 & 0 & 1 &\vv \tau_2\\
  \vv\xi_1&\xi' &\xi'' & \vv \xi_2
\end{pmatrix}
\\=
 (\A\B;q)_2 \, g_{\A,\B}\begin{pmatrix}
  \vv \tau_1 & 1 & 0 &\vv \tau_2\\
  \vv\xi_1&\xi' &\xi'' & \vv \xi_2
\end{pmatrix} \wt w_{\vv\sigma\topp {1,0}}(\A\B)-
q\,(\A\B;q)_2 \, g_{\A,\B}\begin{pmatrix}
  \vv \tau_1 & 0 & 1 &\vv \tau_2\\
  \vv\xi_1&\xi' &\xi'' & \vv \xi_2
\end{pmatrix} \wt w_{\vv\sigma\topp {0,1}}(\A\B)
\\
=(\A\B;q)_2 \,g_{\A,\B}\begin{pmatrix}
  \vv \tau_1 & 1-\xi'' & \vv \tau_2\\
  \vv\xi_1&\xi' & \vv \xi_2\end{pmatrix} \wt w_{\vv\sigma\topp {1,0}}(\A\B) - q \A\B (\A\B;q)_2 \, g_{\A,\B}\begin{pmatrix}
  \vv \tau_1 & 1-\xi'' & \vv \tau_2\\
  \vv\xi_1&\xi' & \vv \xi_2\end{pmatrix}\wt w_{\vv\sigma\topp {0,1}}(\A\B)
  \\= \left(\wt w_{\vv\sigma\topp {1,0}}(\A\B) - q \A\B \wt w_{\vv\sigma\topp {0,1}}(\A\B)\right) (\A\B;q)_2 \, g_{\A,\B}\begin{pmatrix}
  \vv \tau_1 & 1-\xi'' & \vv \tau_2\\
  \vv\xi_1&\xi' & \vv \xi_2\end{pmatrix},
 \end{multline*}
 so \eqref{Q:bulk} follows from \eqref{bulk-except2prove}.

 Note that if  $\vv\tau_1$  or $\vv \tau_2$ is empty, the identity \eqref{bulk-exception} compares expressions different from what we compare in \eqref{g1L} or \eqref{g-right0}, so these are not "the same exceptions".
  }
  From the second row of Table \ref{TabT1} we read out that for $J=0$ we have
  $$
\sigma_{0}\topp{1,0}=\sigma_{0}+1, \quad \sigma_{0}\topp{0,1}=1,
$$
while
  $$
\sigma_{j}\topp{1,0}=\sigma_{j}, j=1,\dots,r, \quad  \sigma_{j}\topp{0,1}=\sigma_{j-1}, j=1,\dots,r+1.
$$
(In particular,  $\vv \sigma$ and $\vv\sigma\topp{0,1}$ are compositions of numbers $L+1$ and $L+2$ respectively into the same number of parts, while the composition $\vv\sigma\topp{1,0}$ of $L+2$ has one more part.)
Thus using \eqref{w2Dq'} again, we have
$$\wt w_{\vv\sigma\topp {1,0}}(z)=\Dq z\prod_{j=0}^r \left((\Dq z)^{\sigma_j-1}\Dq\right) [\tfrac{1}{1-z}], \quad
\wt w_{\vv\sigma\topp {0,1}}(z)=\Dq \prod_{j=0}^r \left((\Dq z)^{\sigma_j-1}\Dq\right) [\tfrac{1}{1-z}].$$
Invoking \eqref{D-q-comm}  we see that
$$\wt w_{\vv\sigma\topp {1,0}}(z)-q z  \wt w_{\vv\sigma\topp {0,1}}(z)
=
(\Dq z-q z \Dq)\prod_{j=0}^r \left((\Dq z)^{\sigma_j-1}\Dq\right) [\tfrac{1}{1-z}]=
\wt w_{\sigma}(z).
$$
This concludes the proof of \eqref{Q:bulk} and completes the proof of Theorem \ref{thm2}.

\section{Second proof of Theorem \ref{thm1}}\label{sec:2ndproof}

This section presents a proof of Theorem \ref{thm1} independent of Theorem \ref{thm2}. Our goal is to  derive  formulas \eqref{w-def} and \eqref{Q-def} for $\A\B<1$ from the results in \cite[Section 2.3]{Barraquand2023Motzkin}.

Denote by $\calS_L=\{\vv\gamma\in \ZZ^{L+1}: \gamma_0=0, \gamma_i-\gamma_{i-1}\in\{0,\pm1\}\}$ the set of all paths $\vv\gamma$ that can arise from \eqref{gamma:def}.
 Since the mapping $\Omega_L\times\Omega_L\ni (\vv \tau,\vv \xi)\mapsto (\vv \tau,\gamma(\vv \tau,\vv\xi))\in\Omega_L\times\calS_L$
 is a bijection and expression \eqref{Q-def} depends only on $\vv\gamma$, Theorem \ref{thm1} can be restated   by representing  $\mu$ as the marginal law corresponding to expression \eqref{PTL1} interpreted as  the joint law of $(\vv \tau,\vv \gamma)$ on $\Omega_L\times\calS_L$. One can describe this law by first specifying the law of $\vv\gamma$ and then the conditional law $\vv \tau|\vv\gamma$.  %
  Formulas \eqref{Q-def} and \eqref{PTL1} specify the  Radon-Nikodym derivative of the joint law of $(\vv \tau,\vv\gamma)$ with respect to the uniform measure on $\Omega_L\times\Omega_L$ which assigns double weight to the horizontal  edges of  path $\vv\gamma$. That is, with
 \begin{equation}
   \label{def:H}
   H(\vv\gamma):=\#\{j\in\{1,\dots,L\}: \gamma_{j}=\gamma_{j-1}\},
 \end{equation}
 the marginal law of $\vv\gamma\in\calS_L$ corresponding to \eqref{PTL1} is
\begin{equation}
   \label{P:gamma}
   \PP(\vv \gamma)= \frac{1}{Z} 2^{H(\vv \gamma)} \frac{\B^{\gamma_L}}{(\A\B)^{\min(\vv \gamma)}} w_{\sigma(\vv\gamma)}(\A\B).
\end{equation}

Let $\vv \eta=(\eta_j)$ be a sequence of independent Bernoulli(1/2) random variables. We then define $\vv \tau$ as a function of $(\vv \gamma, \vv \eta)$ by
\begin{equation}
  \label{taus} \tau_j=\begin{cases}
1 & \gamma_{j}-\gamma_{j-1}=1, \\
0 & \gamma_{j}-\gamma_{j-1}=-1, \\
\eta_j & \gamma_{j}-\gamma_{j-1}=0,\\
\end{cases}
\end{equation}
$j=1,2,\dots,L$.
This defines the same joint law of $(\vv \tau,\vv\gamma)$ as  \eqref{PTL1} and provides a more probabilistic description of the invariant measure $\mu$ as a result of a two step randomization, with random choice   \eqref{P:gamma} of $\vv\gamma$ followed by \eqref{taus}.

Using this formulation, Theorem \ref{thm1} for $\A\B<1$ can be obtained from the findings in \cite[Section 2.3]{Barraquand2023Motzkin}, adapted here to our  notation and terminology. (Their construction is summarized in the proof below.)
%(See also \cite[Section 2.2.1]{himwich2024stationarymeasureopenkpz})

\begin{proof}[Second proof of Theorem \ref{thm1}]
 Denote by $\calM_L$ the set of all generalized Motzkin paths, which are sequences $\vv m=(m_0,\dots,m_L)\in \ZZ_{\geq 0}$ such that $m_{j}-m_{j-1}\in\{0,\pm 1\}$.
 As in \eqref{def:H}, we write
 \[H(\vv m)=\#\left\{j\in\{1,\dots,L\}:  m_j=m_{j-1}\right\}\]
 for the number of horizontal edges.

For $\A\B<1$, we introduce a probability measure on $\calM_L$ defined by
\begin{equation}
  \label{Pr(m)}
  \Pr(\vv m)=\frac{2^{H(\vv m)}\A^{m_0}\B^{m_L}}{\mathsf Z}  \prod_{k=0}^L [m_k+1]_q.
\end{equation}
(After accounting for the shift in notation, this is the normalized weight function from \cite[formula (16)]{Barraquand2023Motzkin}.)
As described in \cite[Section 2.3]{Barraquand2023Motzkin}, the stationary measure of ASEP can be realized as a sequence of random variables $\tau_1,\dots,\tau_L$ constructed as follows:
\begin{enumerate}[(i)]
\item Select a random Motzkin path $\vv m\in\calM_L$ with probability \eqref{Pr(m)} and as previously let $\vv \eta=(\eta_j)$ be a sequence of independent Bernoulli(1/2) random variables, which are also independent of $\vv m$.
 \item    As in \eqref{taus}, define $\vv \tau$ as a function of $(\vv m, \vv \eta)$ by
\begin{equation*}
  \label{taus-m} \tau_j=\begin{cases}
1 & m_{j}-m_{j-1}=1, \\
0 & m_{j}-m_{j-1}=-1, \\
\eta_j & m_{j}-m_{j-1}=0, \\
\end{cases}
\end{equation*}
\end{enumerate}
 $j=1,2,\dots,L$. (Compare  \cite[formula (23)]{Barraquand2023Motzkin}.) We note that the conditional distribution of $\vv\tau$ given $\vv m$ relies solely on the {\em shape} of $\vv m$ (that is, on the sequence of horizontal, upward, and downward steps) and is independent of the initial position $m_0$. Consequently, the random walk path $\vv \gamma = (0, m_1 - m_0, \dots, m_L - m_0)$, which is not a Motzkin path, serves as a useful representation for the shape of $\vv m$.
All generalized Motzkin paths $\vv m$ of shape $\vv\gamma$ are obtained from $\vv \gamma$ by translations $\vv \gamma+ n$, over all $n$ such that $n':=n+\min \vv \gamma\geq 0$.  (The latter condition ensures the positivity property required from the generalized Motzkin paths.)

Since  $2^{H(\vv m)} =2^{H(\vv\gamma)}$, the weights of all generalized Motzkin paths $\vv m$ of a given shape $\vv \gamma$
are given by
$$2^{H(\vv m)} \A^{m_0}\B^{m_L}  \prod_{k=0}^L [m_k+1]_q =2^{H(\vv\gamma)}\A^n \B^{\gamma_L+n}
\prod_{k=0}^L [n+\gamma_k+1]_q =
2^{H(\vv\gamma)} (\A\B)^{n'-\min(\vv\gamma)}\B^{\gamma_L}\prod_{j=0}^r [n'+j+1]^{\sigma_j},
 $$
 where $\vv \sigma=\sigma(\vv\gamma)$ is  given by \eqref{def:sigma} and $n'\geq 0$.
Summing   over $n'\in\ZZ_{\geq 0}$ and normalizing, we obtain \eqref{P:gamma} with $w_\vv\sigma$ given by \eqref{w-def},  up to a multiplicative constant $(\A\B;q)_{L+2}$ that cancels out after normalization.
This proves Theorem \ref{thm1} in the form given by \eqref{P:gamma} and \eqref{taus}  for $\A\B<1$.

The extension to all $\A,\B\geq 0$ is by analyticity.
As noted in \cite[Remark 1.9]{barraquand2024stationary}, the invariant measure $\mu$ is a real analytic function of the parameters $\A,\B>-1$, when $\mu$ is unique, and $\mu$ is unique for open ASEP. By Lemma \ref{lem1}, the right hand side of \eqref{TL2mu} is a rational function, which is analytic away from the poles, and it has no poles  when $\A,\B\geq 0$, so it is given by a convergent power series in the neighborhood of $\A=0,\B=0$. Real analytic functions (power series) that coincide near the origin, coincide within their domain of convergence.

\arxiv{
We note that condition
  $\A,\B\geq 0$   ensures that the resulting two-layer ensemble $P_{\rm TL}$ is not a signed measure and that the normalization constant $Z$ is positive.
 For example, with $\A=0$ and $\B<0$, the weight function \eqref{Q-def} takes positive and negative values on nonnegative paths $\gamma(\vv \tau,\vv \xi)$, depending on the parity of the integer $\gamma_L$.
}
\end{proof}

\subsection*{Acknowledgements} We thank Jacek Weso{\l}owski for the suggestion that it should be possible to isolate the contribution of TASEP to the invariant measure of ASEP. We also thank Guillaume Barraquand,
Yizao Wang and Jacek Weso{\l}owski for helpful comments on an early draft of this paper.
This research was partially supported by Simons Grant~(703475).

%%% BBL produced using alea3 style

\end{document}